\definecolor{DarkBlue}{rgb}{0.00,0.00,0.55}
\definecolor{DarkRed}{rgb}{0.55,0.00,0.00}
\definecolor{DarkGreen}{rgb}{0.00,0.55,0.00}
\definecolor{Bittersweet}{rgb}{1.0, 0.44, 0.37}
\definecolor{Purple}{rgb}{0.5, 0.0, 0.5}
\itshape\color{DarkGreen},
\newtheorem{theorem}{Theorem}[section]
\newtheorem{proposition}{Proposition}[section]
\newtheorem{corollary}{Corollary}[section]
\def\bfx{{\mathbf{x}}}
\def\bfu{{\mathbf{u}}}
\def\bfv{{\mathbf{v}}}
\def\hdiv{{H(\mathrm{div})}}
\def\hcurl{{H(\mathrm{curl})}}
\newcommand{\opnorm}{\@ifstar\@opnorms\@opnorm}
\newcommand{\@opnorms}[1]{%
        \left|\mkern-1.5mu\left|\mkern-1.5mu\left|
        #1
        \right|\mkern-1.5mu\right|\mkern-1.5mu\right|
}
\newcommand{\@opnorm}[2][]{%
        \mathopen{#1|\mkern-1.5mu#1|\mkern-1.5mu#1|}
        #2
        \mathclose{#1|\mkern-1.5mu#1|\mkern-1.5mu#1|}
}
\pgfplotsset{compat=1.13}
\begin{document}

\begin{frontmatter}


\title{Preconditioning mixed finite elements for tide models}

\tnotetext[t1]{This work was supported by NSF 1912653.}
\author[bu]{Robert C. Kirby\corref{cor1}}
\ead{robert\_kirby@baylor.edu}

\author[bu]{Tate Kernell}
\ead{otkernell@gmail.com}

\cortext[cor1]{Corresponding author}
\address[bu]{Department of Mathematics, Baylor University, One Bear Place \#97328, Waco, TX 76798-7328, United States}


\begin{abstract}
We describe a fully discrete mixed finite element method for the linearized rotating shallow water model, possibly with damping.  While Crank-Nicolson time-stepping conserves energy in the absence of drag or forcing terms and is not subject to a CFL-like stability condition, it requires the inversion of a linear system at each step.  We develop weighted-norm preconditioners for this algebraic system that are nearly robust with respect to the physical and discretization parameters in the system.  Numerical experiments using Firedrake support the theoretical results.
\end{abstract}

\begin{keyword}
  Block preconditioners \sep
Finite element \sep
Tide models


  MSC[2010] 65N30 \sep 65F08

\end{keyword}

\end{frontmatter}


\section{Introduction}
Accurate modeling of tides plays an important role in several disciplines.  For example, geologists use tide models to help understand sediment transport and coastal flooding, while oceanographers study tides to discern mechanisms sustaining global circulation~\cite{GaKu2007,MuWu1998}.  Finite element methods making use of unstructured (typically triangular) meshes are attractive to handle irregular coastlines and topography~\cite{We_etal2010}.  
In many situations, it is sufficient to use a linearized shallow water
model with rotation and a parameterized drag term.  In particular, the
literature contains many
papers~\cite{CoLaReLe2010,CoHa2011,Ro2005,RoRoPo2007,RoBe2009,RoRo2008}
studying mixed finite element pairs as horizontal discretizations for
ocean and atmosphere models, and we continue study of this case here.

Much of the literature relates to dispersion relations and enforcement of conservation principles by mixed methods, our prior work in this area has been to focus on energy estimates.  In~\cite{CoKi}, we gave a careful account of the effect of linear bottom friction in semidiscrete mixed methods, showing that, absent forcing, one obtained exponential damping of a natural energy functional.  This allowed estimates of long-time stability and optimal-order \emph{a priori} error estimates.  Then, we handled the (much more delicate) case of a broad family of nonlinear damping terms in~\cite{cotter2018mixed}.  In this case, the energy decay is sub-exponential (typically bounded by a power law) but still strong enough to admit long-time stability and error estimates.

While our work in~\cite{CoKi,cotter2018mixed} focused on the
semidiscrete mixed finite element case, we now turn to certain issues
related to time-stepping.  Crank-Nicolson time-stepping is
second-order accurate, A-stable (not subject to  CFL-like stability
condition), and exactly energy
conserving in the absence of forcing and damping.  However, because it is implicit, it requires the solution of a system of algebraic equations at each time step.  For linear damping models, this system is linear, but nonlinear otherwise.  The point of this paper is to develop robust preconditioners for the linear system (or Jacobian of the nonlinear one) for use in conjunction with a Krylov method such as GMRES~\cite{saad1986gmres}.

In addition to the mesh size and time step, our model also depends on
a number of physical parameters, described in the following section.
Our goal is to design a preconditioner that enables GMRES to converge 
with an overall iteration counts that depend as
little as possible on these parameters.  We follow the technique of
using weighted-norm preconditioners~\cite{Arnold1997}.  Here, one designs an inner product with respect to which the variational problem is bounded with bounded inverse, and such bounds should depend weakly, if at all, on parameters.

In the rest of the paper, we describe the particular tide model of interest and its discretization in Section~\ref{sec:model}.  This includes Crank-Nicolson time-stepping and a comparison to a symplectic Euler method.  Then, we turn to preconditioning the Crank-Nicolson system in Section~\ref{sec:precond}.  After analyzing a simple block-diagonal preconditioner with scaled mass matrices, we develope and analyze a parameter-weighted inner product on $\hdiv \times L^2$.  Our estimate shows that the preconditioned system has an intrinsic time scale determined by the Rossby number that must be resolved by the time step.  This does not seem to be a major practical constraint.  After discussion and analysis of these preconditioners, we turn to numerical experiments validating the theory in Section~\ref{sec:numres} and draw some conclusions in Section~\ref{sec:conc}.

\section{Description of finite element tidal model}
\label{sec:model}
The nondimensional linearized rotating shallow water
model with linear drag and forcing on a two
dimensional surface $\Omega$ are given by
\begin{equation}
  \label{eq:theinitialpde}
\begin{split}
u_t + \frac{f}{\epsilon} u^\perp + \frac{\beta}{\epsilon^2} \nabla
\left( \eta - \eta^\prime \right) + C u & = 0, \\
\eta_t + \nabla \cdot \left( H u \right) & = 0,
\end{split}
\end{equation}
where $u$ is the nondimensional two dimensional velocity field tangent
to $\Omega$, $u^\perp=(-u_2,u_1)$ is the velocity rotated by $\pi/2$, $\eta$ is the nondimensional free surface elevation above
the height at state of rest, $\nabla\eta'$ is the (spatially varying)
tidal forcing, $\epsilon$ is the Rossby number (which is small for
global tides), $f$ is the spatially-dependent non-dimensional Coriolis
parameter which is equal to the sine of the latitude (or which can be
approximated by a linear or constant profile for local area models),
$\beta$ is the Burger number (which is also small), $C$ is the (spatially varying) nondimensional drag
coefficient and $H$ is the (spatially varying) nondimensional fluid
depth at rest, and $\nabla$ and $\nabla\cdot$ are the intrinsic
gradient and divergence operators on the surface $\Omega$,
respectively.

Prior energy and error analysis in~\cite{CoKi} assumes that the bottom friction satisfies some $0 < C_* \leq C(\bfx) \leq C^*$.  The strict lower bound allows one to show an exponential damping of the energy.  However, the model is well-posed and, absent forcing, has non-increasing energy.  Since we are not working with energy estimates, it is sufficient for us to merely assume the upper bound $C(\bfx) \leq C^*$.  

As in~\cite{CoKi}, we arrive at a form suitable for discretization by mixed methods  by working with the linearized momentum
$\widetilde{u} = H u$ rather than velocity. After making this substitution and dropping the
tildes, we obtain
\begin{equation}
\begin{split}
\frac{1}{H}u_t + \frac{f}{H\epsilon} u^\perp + \frac{\beta}{\epsilon^2} \nabla
\eta  + \frac{C}{H} u & = F, \\
\eta_t + \nabla \cdot  u & = 0.
\end{split}
\label{eq:thepde}
\end{equation}
A natural weak formulation of this equations is to seek $u \in \hdiv$
and $\eta \in L^2$ so that
\begin{equation}
\begin{split}
\left( \frac{1}{H}u_t , v \right) 
+ \frac{1}{\epsilon} \left( \frac{f}{H} u^\perp , v \right) 
- \frac{\beta}{\epsilon^2} \left( \eta ,
\nabla \cdot v \right) + \left( \frac{C}{H} u , v \right) & = 
\left( F , v \right), \\
\left( \eta_t , w \right) + \left( \nabla \cdot u  , w \right)& = 0
\end{split}
\label{eq:mixed}
\end{equation}
for all $v \in \hdiv$ and $w \in L^2$.

We select suitable mixed finite element spaces $V_h \subset \hdiv$ and
$W_h \subset L^2$ of order $k$ satisfying the commuting projection
and having divergence mapping $V_h$ onto $W_h$~\cite{brezzi1991mixed}.  Since we are not proving error estimates in this paper, we do not recount the particulars of these projections.  However, we will need to make use of the \emph{inverse assumption} that there exists some $C_I$ (typically depending on the polynomial degree and mesh shape but not element size) such that
\begin{equation}
  \label{eq:inverse}
  \| \nabla \cdot u \| \leq \tfrac{C_I}{h} \| u \|
\end{equation}
for all $u \in V_h$.

Our examples will use the Raviart-Thomas~\cite{RavTho77a} triangular finite elements for
$V_h$ together with discontinuous piecewise polynomials for $W_h$.  We
follow the ordering of the Periodic Table of Finite Elements~\cite{arnold2014periodic}
summarize Finite Element Exterior Calculus~\cite{arnold2006finite} rather than the original
ordering of Raviart and Thomas so that the lowest order RT space is
$RT_1$ combined with $dP_0$.  We will also employ the recently-developed trimmed serendipity elements~\cite{gillette2019computational, gillette2019trimmed}.  These elements are smaller than the rectangular RT elements for the same order of approximation, and pair with $P_k$ rather than tensor-product spaces for $W_h$.

We define $u_h \subset V_h$
and $\eta_h \subset W_h$ as solutions of the discrete variational
problem
\begin{equation}
\begin{split}
\left( \frac{1}{H}u_{h,t} , v_h \right) 
+ \frac{1}{\epsilon} \left( \frac{f}{H} u_h^\perp , v_h \right) 
- \frac{\beta}{\epsilon^2} \left( \eta_h ,
\nabla \cdot v_h \right) + \left( \frac{C}{H} u_h , v_h \right) & =
\left( F , v_h \right)
, \\
\left( \eta_{h,t} , w_h \right) + \left( \nabla \cdot u_h  , w_h \right)& = 0.
\end{split}
\label{eq:discrete_mixed}
\end{equation}

In previous work~\cite{CoKi}, we analyzed the semi-discrete form of
this method, and in~\cite{kirbykieu} we analyzed a symplectic Euler time
discretization of mixed methods for the simpler (obtained from our current model putting $f=0, C=0$ and possibly allowing $\beta, \epsilon$ to vary spatially)
acoustic wave equation.  For each time step, this method only requires
the inversion of a mass matrix for $V_h$ and another for $W_h$.
However, it requires a CFL-like time step constraint with 
$\Delta t = \mathcal{O}(h)$ (the constant depends somehow on the shape
of mesh elements), is only first-order accurate, and only
conserves a quantity close to the actual system energy in the undamped
case.  Moreover, in the finite element context even explicit methods
require the inversion of mass matrices, unless the mesh
and approximating spaces admit some kind of diagonal approximation (e.g. lumping).

In this paper, we turn to implicit methods, especially
Crank-Nicolson.  This method is second-order accurate in time, does
not require a CFL condition for stability, and exactly conserves the
system energy for the undamped equations.   We also point out
that, for linear problems it is equivalent to the implicit midpoint
rule, which is the lowest-order Gauss-Legendre implicit Runge-Kutta
method.  In addition to their A-stability, these methods are both
symplectic and B-stable, which makes them seem quite appropriate for
problems based on a energy conservation principle plus some
damping mechanism.  (Note: for nonlinear problems, Crank-Nicolson is actually the lowest-order LobattoIIIA method, which is still A-stable but not symplectic.) On the down side, it requires the
solution of a more complicated system of equations at each time step than symplectic Euler. 
Error analysis goes through following standard
techniques; our goal here is the design and analysis of an effective
preconditioner.  

Selecting time levels $0 = t_0 < t_1 < \dots < t_N = T$ with
$t_n = t_0 + n \Delta t$, we seek a sequence of $\{ (u_h^n, \eta_h^n)
\}_{n=0}^N$ such that for each $n \geq 1$,
\begin{equation}
\begin{split}
\left( \frac{1}{H}\frac{u_h^{n+1}-u_h^n}{\Delta t} , v_h \right) 
+ \frac{1}{\epsilon}
\left( \frac{f}{2H} \left( (u_h^{n+1})^\perp + (u_h^n)^\perp \right) , v_h \right)   & \\
- \frac{\beta}{2\epsilon^2} \left( \eta_h^{n+1} + \eta_h^n ,
\nabla \cdot v_h \right) + \left( \frac{C}{2H} \left(u_h^{n+1} + u_h^n\right) , v_h \right) & =
\left( F^{n+\tfrac{1}{2}} , v_h \right)
, \\
\left( \frac{\eta_{h}^{n+1}-\eta_h^n}{\Delta t} , w_h \right) + \left(
\tfrac{1}{2} \nabla \cdot (u_h^{n+1} + u_h^n)  , w_h \right)& = 0.
\end{split}
\label{eq:cn_mixed}
\end{equation}
for all $v_h \in V_h$ and $w_h \in W_h$.  

Given $u_h^n$ and $\eta_h^n$, this defines a linear system for 
 $u_h^{n+1}$ and $\eta_h^{n+1}$ that must be solved at each time
step.  Dropping the superscripts and subscripts, multiplying
through by $\Delta t$ and putting $k \equiv \tfrac{\Delta t}{2}$, we arrive at a
canonical equation to be solved at each time step:

\begin{equation}
  \begin{split}
    \left( \frac{1}{H} u , v \right)
    + \left( \frac{f k}{\epsilon H} u^\perp , v \right)
  - \frac{\beta k}{\epsilon^2} \left( \eta, \nabla \cdot v \right)
  + \left( \frac{C k}{H} u , v \right) & = \left( F, v \right), \\
  \left( \eta , w \right) + k\left( \nabla \cdot u , w \right)
  & = \left( G , w \right),
  \end{split}
  \label{prec_me}
\end{equation}
where the solution $u \in V_h$ and $\eta \in W_h$ and similar for test functions.
Equivalently, we can define a bilinear form on the product space
$V_h \times W_h$.  Adding together the first equation and
$\tfrac{\beta}{\epsilon^2}$ times the second, we let  $\bfu = (u,
\eta)$ and $\bfv = (v, w)$, to define
\begin{equation}
  \label{eq:weakform}
  \begin{split}
  a(\bfu, \bfv) =  & \left( \frac{1}{H} u , v \right)
    + \left( \frac{f k}{\epsilon H} u^\perp , v \right)
    - \frac{\beta k}{\epsilon^2} \left( \eta, \nabla \cdot v \right) \\
    & 
  + \left( \frac{C k}{H} u , v \right)
  + \frac{\beta}{\epsilon^2}\left( \eta , w \right) + \frac{\beta
    k}{\epsilon^2} \left( \nabla \cdot u , w \right).
  \end{split}
\end{equation}
Before proceeding, we remark that other methods (e.g. backward Euler or the implicit midpoint rule) would give variational problems of this form as well.
Now, solving a variational problem associated with this bilinear form gives rise to a block-structured linear system
\begin{equation}
  \label{eq:blockmat}
  \begin{bmatrix}
    \check{M} & -\frac{\beta k}{\epsilon^2} D^T \\
    \frac{\beta k}{\epsilon^2} D & \frac{\beta}{\epsilon^2} M
  \end{bmatrix}
  \begin{bmatrix} \mathrm{u} \\ \mathrm{\eta} \end{bmatrix}
  = \begin{bmatrix} \mathrm{f} \\ \mathrm{g} \end{bmatrix},
\end{equation}
where for finite element bases $\{ \psi_i \}_{i=1}^{\dim V_h}$ and $\{ \phi_i \}_{i=1}^{\dim W_h}$, we have matrices
\begin{equation}
  \label{eq:mats}
  \begin{split}
  \check{M}_{ij} & = \left( \frac{1+C k}{H} \psi_j , \psi_i \right)
  + \left( \frac{f k}{\epsilon H} \psi_j^\perp , \psi_i \right),  \\
  D_{ij} & = \left( \nabla \cdot \psi_j, \phi_i \right), \\
  M_{ij} & = \left( \phi_j , \phi_i \right).
  \end{split}
\end{equation}
Note that $\check{M}$ is not just a weighted mass matrix.  It is nonsymmetric owing to skew-symmetric term above.  This skew term on the diagonal (rather than the off-diagonal blocks having a skew structure) that seems to lead to parameter-dendence later in our weighted-norm estimate.

\section{Preconditioning}
\label{sec:precond}
Now, we turn to developing a preconditioner for~\eqref{eq:weakform},~\eqref{eq:blockmat}.  Here, we concretize the abstract approach taken in~\cite{kirby2010functional,mardal2011preconditioning} for our particular tide model.  Essentially, a bounded bilinear form $a$ on a Hilbert space $V$ is equivalent to a linear operator $\mathcal{A}$ from $V$ into its topological dual $V^\prime$.  Classical Galerkin discretization restricts this bilinear form and operator to some finite-dimensional subspace $V_h \subset V$.  Moreover, the discrete operator $\mathcal{A}_h: V_h \rightarrow V_h^\prime$ is encoded by the usual finite element stiffness matrix $A$ obtained by substituting each member of a basis for $V_h$ into each argument of $a$.

When one seeks to solve the linear system for the discrete solution by means of an iterative method such as GMRES~\cite{saad1986gmres}, the \emph{conditioning} of the matrix $A$ plays a critical role.  As the condition number, and hence number of iterations required, of $A$ degrades under mesh refinement, it is critical to \emph{precondition} the linear system by means of (at least morally) pre-multiplying the system
\[
A x = b
\]
by some linear operator $P^{-1}$.  Thus, one obtains the equivalent system
\[
P^{-1} A x = P^{-1} b,
\]
and if the conditioning of $P^{-1} A$ is much better than that of $A$,
the iterative method should converge much faster.  Of course, the cost of applying $P^{-1}$ at each iteration must not offset the reduction in iteration count for the preconditioner to be successful.

One can think of the matrix $P$ as discretizing some simpler operator
$\mathcal{P}:V \rightarrow V^{\prime}$ so that the product $P^{-1} A$
encodes a bounded operator from $V_h$ onto itself.  In the simplest
case this is the \emph{Riesz map}, which isometrically identifies each
$f \in V_h^\prime$ uniquely with some $v \in V_h$ so that $F(u) = (u, v)$
for all $u \in V_h$.  Bounded operators have bounded spectra, and
functional-analytic bounds obtained on $\mathcal{P}^{-1} \mathcal{A}$
mean that the matrices $P^{-1} A$ will inherit mesh-independent bounds
on their spectra.  We refer to~\cite{kirby2010functional,
  mardal2011preconditioning} for further discussion of this approach.

In addition to mesh refinement, variation in physical parameters can also contribute adversely to the conditioning of discrete problems.  While the standard Riesz map serves as a simple preconditioner that eliminates mesh dependence, it does not address physical constants.  Increasingly, attempts are made to design \emph{parameter-robust} preconditioners, meaning that they also eliminate or at least mitigate the dependence of the conditioning on system parameters.  

In this section, we present two preconditioners.  One is based on
inverting weighted mass matrices.  This utilizes an inverse assumption
in $\hdiv$ to work in purely a discrete $L^2$ inner product, so that
the bounds depend on the mesh parameter $h$ in such a way that
conditioning (as expected) degrades as $h \searrow 0$.  However, this
dependence can be offset by taking $k = \mathcal{O}(h)$, a CFL-like
criterion that enforces conditioning rather than stability of the
time-discretization.  Our second approach better respects the
functional analytic structure, working in a weighted $\hdiv \times L^2$ inner product.  Here,
we obtain a mesh-independent bound that is also far less dependent on
other parameters at the expense of a more complicated operator to
invert as a preconditioner.  

\subsection{Mass matrices: block diagonal}
A simple approach that may help for small time steps is to
precondition the linear system with the block diagonal matrix
\begin{equation}
  \label{eq:Mprec}
  P_M = 
  \begin{bmatrix} \tilde{M} & 0 \\ 0 & \frac{\beta}{\epsilon^2} M \end{bmatrix},
\end{equation}
where
\begin{equation}
  \tilde{M}_{ij} = \left( \frac{1}{H} \psi_j, \psi_i \right)
\end{equation}
is the mass-like matrix obtained from the $\tfrac{1}{H}$-weighted inner product of the $V_h$ basis functions and $M$ is as in~\eqref{eq:mats}.

This is motivated by the observing that the bilinear form $a$
from~\eqref{eq:weakform} is continuous and coercive on discrete subspaces of
$(L^2)^2 \times L^2$, although the constants depend on the
discretization parameters $h$ and $k$ as well as the physical parameters.  We define the norm
\begin{equation}
  \| \bfu \|_2^2 = \| u \|_{\frac{1}{H}}^2 + \frac{\beta}{\epsilon^2} \| \eta \|^2,
\end{equation}
where $\| u \|_{\frac{1}{H}}^2 = \left(\tfrac{1}{H} u, u \right)$.  The
the inner product for this norm generates the matrices in~\eqref{eq:Mprec}.

Establishing well-posedness of variational problems for the bilinear form $a$ follows from demonstrating continuity and inf-sup estimates in $\hdiv \times L^2$.  However, we can study the mass matrix preconditioner~\eqref{eq:Mprec} by means of establishing continuity and coercivity of $a$ on finite element subspaces equipped with the $L^2$ norms.  This analysis is somewhat nonstandard, but it establishes an alternate proof of solvability of the discrete system and more importantly, allows us to demonstrate mesh-independence of~\eqref{eq:Mprec} as a preconditioner subject to a CFL-like restriction on $k$.

\begin{proposition}
  Let
  \begin{equation}
      \kappa = 4 \max\left\{ 1 + C^* k+ \frac{f^* k}{\epsilon},
      \frac{\sqrt{\beta} k}{\epsilon C_I H_* h} \right\}\\
  \end{equation}
  Then, the bilinear form $a$ satisfies
  \begin{equation}
    \label{eq:l2cont}
    a(\bfu, \bfv) \leq \kappa \| \bfu \|_2 \| \bfv \|_2.
  \end{equation}
  and
  \begin{equation}
    \label{eq:l2coercive}
    a(\bfu, \bfu) \geq \| \bfu \|^2_2
  \end{equation}
\end{proposition}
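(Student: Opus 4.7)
My plan is to handle coercivity first, as it should be almost immediate, and then work through continuity term by term.

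For coercivity \eqref{eq:l2coercive}, I would set $\bfv = \bfu$ in the definition \eqref{eq:weakform} and observe three cancellations/signs. First, the rotation term vanishes pointwise since $u^\perp \cdot u = -u_2 u_1 + u_1 u_2 = 0$, so $\left(\tfrac{fk}{\epsilon H} u^\perp, u\right) = 0$ regardless of the sign of $f$. Second, the pair of ``off-diagonal'' terms $-\tfrac{\beta k}{\epsilon^2}(\eta, \nabla \cdot u) + \tfrac{\beta k}{\epsilon^2}(\nabla \cdot u, \eta)$ cancel exactly. Third, because $C \geq 0$, the drag contribution $\left(\tfrac{Ck}{H} u, u\right)$ is nonnegative and can be discarded. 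What is left is precisely $\left(\tfrac{1}{H} u, u\right) + \tfrac{\beta}{\epsilon^2}(\eta,\eta) = \|\bfu\|_2^2$, giving \eqref{eq:l2coercive} with constant $1$.

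For continuity \eqref{eq:l2cont}, I would apply Cauchy--Schwarz to each of the six terms in $a(\bfu,\bfv)$ and then re-express each factor in the weighted $\|\cdot\|_2$ norm. The ``diagonal'' velocity terms $\left(\tfrac{1}{H}u,v\right)$, $\left(\tfrac{fk}{\epsilon H}u^\perp,v\right)$, and $\left(\tfrac{Ck}{H}u,v\right)$ all estimate directly in the $\tfrac{1}{H}$-weighted inner product using $|u^\perp| = |u|$ and the uniform bounds $f\leq f^*$, $C\leq C^*$, contributing a joint constant of the form $1 + C^* k + f^* k/\epsilon$ times $\|u\|_{1/H}\|v\|_{1/H}$. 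The pressure mass term $\tfrac{\beta}{\epsilon^2}(\eta,w)$ is already exactly $\tfrac{\sqrt{\beta}}{\epsilon}\|\eta\| \cdot \tfrac{\sqrt{\beta}}{\epsilon}\|w\|$, which is bounded by $\|\bfu\|_2\|\bfv\|_2$ with constant $1$.

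The main obstacle is the mixed pair $-\tfrac{\beta k}{\epsilon^2}(\eta, \nabla\cdot v)$ and $\tfrac{\beta k}{\epsilon^2}(\nabla\cdot u, w)$, where $\nabla\cdot v$ and $\nabla\cdot u$ are not naturally controlled by the $L^2$ norm. This is where the discrete setting and the inverse assumption \eqref{eq:inverse} enter: I would apply Cauchy--Schwarz, then \eqref{eq:inverse} to replace $\|\nabla\cdot v\|$ by $(C_I/h)\|v\|$, and finally convert the unweighted $L^2$ norm to the weighted one via the pointwise bound on $H$. After regrouping so that the factor $\tfrac{\sqrt{\beta}}{\epsilon}\|\eta\|$ pairs with $\|v\|_{1/H}$ (and symmetrically for the other term), each of these cross terms is bounded by a constant proportional to $\tfrac{\sqrt{\beta}\,k}{\epsilon h}$ times $\|\bfu\|_2\|\bfv\|_2$, matching the second argument of the maximum in $\kappa$.

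Finally, I would sum the six bounds. Because each term was estimated against $\|\bfu\|_2\|\bfv\|_2$, and the six contributions split into those scaling like $1 + C^*k + f^*k/\epsilon$ and those scaling like $\sqrt{\beta}k/(\epsilon h)$ (up to the $H$ and $C_I$ factors), a loose grouping with a factor of $4$ absorbs the number of summands and yields the stated $\kappa$. The interesting structural point to emphasize in the write-up is that coercivity holds without any CFL-like restriction, while continuity forces the factor $k/h$ to appear, so the preconditioner $P_M$ will behave well only when $k = \mathcal{O}(h)$.
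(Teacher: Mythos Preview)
Your proposal is correct and follows essentially the same route as the paper: coercivity by the pointwise identity $u^\perp\cdot u=0$, cancellation of the two divergence cross terms, and nonnegativity of the drag; continuity by term-by-term Cauchy--Schwarz, the inverse estimate \eqref{eq:inverse} on the divergence factors, and conversion of $\|u\|$ to $\|u\|_{1/H}$ via the pointwise bound on $H$, followed by a crude count of the summands to obtain the factor $4$. Your closing remark about the $k/h$ dependence appearing only in the continuity constant is exactly the structural point the paper draws from this proposition.
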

\begin{proof}
  We begin with the continuity estimate~\eqref{eq:l2cont}.
  \[
  \begin{split}
  a(\bfu, \bfv) & =  \left( \frac{1}{H} u , v \right)
    + \left( \frac{f k}{\epsilon H} u^\perp , v \right)
  - \frac{\beta k}{\epsilon^2} \left( \eta, \nabla \cdot v \right) \\
& \ \ \
  + \left( \frac{C k}{H} u , v \right)
  + \frac{\beta}{\epsilon^2}\left( \eta , w \right) + \frac{\beta
    k}{\epsilon^2} \left( \nabla \cdot u , w \right) \\
  & \leq \left( 1 + C^* k + \frac{f^* k}{\epsilon} \right) \| u
  \|_{\frac{1}{H}} \| v \|_{\frac{1}{H}} \\
  & \ \ \
  + \frac{\beta }{\epsilon^2} 
  \| \eta \| \| w \|
  + \frac{\beta k}{\epsilon^2}   \| \nabla \cdot u \| \| w \| + \frac{\beta k}{\epsilon^2} \| \eta \| \| \nabla \cdot v \| .
  \end{split}
  \]
  Applying the inverse estimate to the divergences and converting to
  the $\frac{1}{H}$-weighted norm now gives
  \[
  \begin{split}
    a(\bfu, \bfv) & \leq \left( 1 + C^* k + \frac{f^* k}{\epsilon} \right) \| u
  \|_{\frac{1}{H}} \| v \|_{\frac{1}{H}} \\
  & \ \ \
  + \frac{\beta }{\epsilon^2} 
  \| \eta \| \| w \|
  + \frac{\beta k}{C_I h H_*\epsilon^2}   \| u \|_{\frac{1}{H}} \| w \|
  + \frac{\beta k}{C_I h H_* \epsilon^2} \| \eta \| \| v \|_{\frac{1}{H}}.
  \end{split}
  \]
  The result follows by absorbing $\sqrt{\beta}/\epsilon$ into the norm of $\| w \|$ and $\| \eta \|$ in the third and fourth term and then recognizing each term as bounded by $\kappa \| \bfu \| \| \bfv \|$.

  The rescaling of the second equation to produce the bilinear form
  $a$ makes the coercivity estimate rather simple.  Noting that
  $u^\perp \cdot u =0$ pointwise and that the divergence terms in
  $a(\bfu, \bfu)$ cancel, we have
  \[
  \begin{split}
  a(\bfu, \bfu) & = \left( \frac{1}{H} u , u \right)
  + \left( \frac{C k}{H} u , u \right)
  + \frac{\beta k}{\epsilon^2}\left( \eta , \eta \right) \\
  & \geq \left( 1+C_* k \right) \| u \|_{\frac{1}{H}}^2 +
  \frac{\beta}{\epsilon}^2 \| \eta \|^2 \\
  & \geq \| \bfu \|^2_2.
  \end{split}
  \]
\end{proof}
It is possible to achieve slightly better constants (e.g. through more careful use of discrete Cauchy-Schwarz), but the main issue remains:  The conditioning of the system (continuity divided by coercivity constants) depends on the discretization (as well as physical) parameters, scaling like $\tfrac{k}{h}$.  For a fixed time step, the conditioning degrades like $h^{-1}$, and so preconditioning with weighted mass matrices is only scalable if one also imposes a CFL-like time step restriction.  Moreover, even including some weights in the norm, we still have quite a bit of parameter dependence in our estimate.

\subsection{Weighted-norm preconditioning}
The mesh-dependence in our estimate comes from invoking the inverse assumption in order to obtain $L^2$ estimates.   Our bilinear form is not coercive on subspaces of $\hdiv \times L^2$, but we can prove that it still defines a bounded operator with bounded inverse in a weighted norm that nearly eliminates parameter dependence.  Such techniques appear for other applications~\cite{adler2017robust, baerland2017weakly} as well, and are based on defining a suitable (parameter-dependent) inner product in which the problem is well-behaved rather than algebraic considerations such as merely selecting the block diagonal or triangular part of the system matrix~\cite{mardal2007order, wathen1993fast,Howle}

We can  equip $\hdiv$ with the following weighted norm
\begin{equation}
  \| u \|_{a}^2 = \| (1+C k)u \|_{\frac{1}{H}}^2 + \frac{k^2\beta}{\epsilon^2} \| \nabla \cdot u \|^2
\end{equation}
and, as previously, $L^2$ with the norm
\begin{equation}
  \| \eta \|_b^2 = \frac{\beta}{\epsilon^2} \| \eta \|^2.
\end{equation}
We then equip the product space $\hdiv \times L^2$ with the norm
\begin{equation}
  \begin{split}
    \opnorm{\bfu}^2 & = \opnorm{(u, \eta)}^2 = \| u \|_a^2 + \| \eta \|_b^2 \\
    & = \| (1+C k) u \|_{\frac{1}{H}}^2 + \frac{k^2\beta}{\epsilon^2} \| \nabla \cdot u \|^2 + \frac{\beta}{\epsilon^2} \| \eta \|^2
  \end{split}
\end{equation}

This norm is derived from a weighted inner product
\begin{equation}
  \label{eq:goodip}
  ((\bfu, \bfv)) =
  \left(\left(1+Ck\right)u, v\right)_{\frac{1}{H}}  + (u, v) + \frac{k^2 \beta}{\epsilon^2} \left(\nabla \cdot u, \nabla \cdot v\right) + \frac{\beta}{\epsilon^2} \left(\eta, w\right).
\end{equation}
Discretizing this bilinear form on mixed function spaces $V_h \times W_h$ yields a block-diagonal preconditioning matrix:
\begin{equation}
  \label{eq:goodp}
  P =  \begin{bmatrix}
    P_{V_h} & 0 \\ 0 & P_{W_h}
  \end{bmatrix},
\end{equation}
where the first block handles the parameter-weighted $\hdiv$ inner product and the second is just the standard $W_h$ mass matrix scaled by $\frac{\beta}{\epsilon^2}$.  We have
\begin{equation}
  \begin{split}
    (P_{V_h})_{ij} & = \left(\left(1+Ck\right)\psi_j, \psi_i\right)_{\frac{1}{H}}  + (\psi_j, \psi_i) + \frac{k^2 \beta}{\epsilon^2} \left(\nabla \cdot \psi_j, \nabla \cdot \psi_i\right), \\
    (P_{W_h})_{ij} & = \frac{\beta}{\epsilon^2} \left( \phi_j, \phi_i \right).
  \end{split}
\end{equation}

In the lowest-order case (either on triangles or squares), $W_h$ consists of piecewise constants so that $P_{W_h}$ is simply a diagonal matrix.  Since the top left block discretizes a differential operator, applying $P_{V_h}^{-1}$ will constitute the bulk of the cost in applying the preconditioner.  Options based on geometric or algebraic multigrid are available, and we discuss these more later.

The following result shows the boundedness of $a$ in this norm, with mild dependence on parameters, which we discuss below.
\begin{theorem}
  \label{thm:cont}
  For all $\bfu = (u, \eta)$ and $\bfv = (v, w)$ in $\hdiv \times L^2$, the bilinear form $a$ satisfies
  \begin{equation}
    a(\bfu, \bfv) \leq K \opnorm{\bfu} \opnorm{\bfv},
    \end{equation}
    where constant $K = K_{k,\epsilon} = \max\left\{2,1+\frac{k}{\epsilon}\right\}$.
\end{theorem}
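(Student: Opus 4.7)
The plan is to bound $a(\bfu, \bfv)$ by splitting its six integrals into a symmetric ``Riesz-like'' piece and a skew ``cross-coupling'' piece, and estimating each by Cauchy--Schwarz. Writing $a = s + r$ with
\begin{equation*}
  s(\bfu,\bfv) = \left(\tfrac{1+Ck}{H} u, v\right) + \tfrac{\beta}{\epsilon^2}(\eta, w),
  \qquad
  r(\bfu,\bfv) = \left(\tfrac{fk}{\epsilon H} u^\perp, v\right) - \tfrac{\beta k}{\epsilon^2}(\eta, \nabla\cdot v) + \tfrac{\beta k}{\epsilon^2}(\nabla\cdot u, w),
\end{equation*}
the symmetric part $s$ is handled by pointwise Cauchy--Schwarz on each integral, combined with the trivial bound $\|\cdot\|_{\frac{1}{H}} \le \|(1+Ck)\cdot\|_{\frac{1}{H}}$ (valid since $Ck \ge 0$). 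A Cauchy--Schwarz in $\mathbb{R}^2$ then collapses $\|(1+Ck) u\|_{\frac{1}{H}}\|(1+Ck) v\|_{\frac{1}{H}} + \|\eta\|_b\|w\|_b$ into $\opnorm{\bfu}\,\opnorm{\bfv}$, contributing the constant $1$.

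For the cross piece $r$, the Coriolis term is bounded by $(k/\epsilon)\|u\|_{\frac{1}{H}}\|v\|_{\frac{1}{H}}$ after using $|u^\perp| = |u|$ pointwise and $|f| \le 1$ (since $f$ is the sine of latitude), while each divergence coupling factors into one ``$\|\cdot\|_b$'' factor and one ``$\tfrac{k\sqrt\beta}{\epsilon}\|\nabla\cdot\,\cdot\|$'' factor.  Abbreviating $\alpha_1 = \|(1+Ck)u\|_{\frac{1}{H}}$, $\alpha_2 = \tfrac{k\sqrt\beta}{\epsilon}\|\nabla\cdot u\|$, $\alpha_3 = \|\eta\|_b$, and $\gamma_j$ analogously for $\bfv$, the three estimates combine to
\begin{equation*}
  |r(\bfu,\bfv)| \le \tfrac{k}{\epsilon}\alpha_1\gamma_1 + \alpha_3\gamma_2 + \alpha_2\gamma_3.
\end{equation*}
Pulling $\max\{1, k/\epsilon\}$ out of all three terms and applying a single Cauchy--Schwarz in $\mathbb{R}^3$ (viewing the sum as $(\alpha_1,\alpha_3,\alpha_2)\cdot(\gamma_1,\gamma_2,\gamma_3)$) yields $|r(\bfu,\bfv)| \le \max\{1, k/\epsilon\}\,\opnorm{\bfu}\,\opnorm{\bfv}$; summing with the bound on $s$ then gives $K = 1 + \max\{1, k/\epsilon\} = \max\{2, 1+k/\epsilon\}$.

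The only genuine design choice is the grouping: keeping both symmetric mass contributions in $s$ and bundling the Coriolis term with the two divergence couplings in $r$ is what allows a single factor $\max\{1, k/\epsilon\}$ to absorb all three ``cross'' coefficients at once, rather than producing an additive $k/\epsilon$ on top of several separate $O(1)$ contributions. The argument uses only Cauchy--Schwarz and the pointwise identities $u\cdot u^\perp = 0$ and $|u^\perp| = |u|$; in particular it does not invoke the inverse assumption~\eqref{eq:inverse}, which is the reason the resulting bound is mesh-independent.
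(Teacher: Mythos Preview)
Your proof is correct and follows essentially the same route as the paper: bound each integral by Cauchy--Schwarz (using $|f|\le 1$, $|u^\perp|=|u|$, and $\|\cdot\|_{1/H}\le\|(1+Ck)\cdot\|_{1/H}$), then collapse the resulting products via discrete Cauchy--Schwarz to reach $K=\max\{2,1+k/\epsilon\}$. The only cosmetic difference is packaging---the paper merges the mass and Coriolis contributions into a single $(1+k/\epsilon)\alpha_1\gamma_1$ term and then applies one Cauchy--Schwarz in $\mathbb{R}^4$ (with $\alpha_3$ appearing twice, producing the factor~$2$), whereas your $s+r$ split applies two smaller Cauchy--Schwarz steps and recovers the same constant as $1+\max\{1,k/\epsilon\}$; also note that the identity $u\cdot u^\perp=0$ you mention is not actually needed here (it is used in the inf--sup argument, not continuity).
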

\begin{proof}
  The proof is a direct calculation using Cauchy-Schwarz, the isometry of $\cdot^\perp$, and upper bounds on some of the spatially varying coefficients
  \begin{equation}
    \begin{split}
      a(\bfu, \bfv) & =
      \left( \frac{1}{H} u , v \right)
      + \left( \frac{f k}{\epsilon H} u^\perp , v \right)
      - \frac{\beta k}{\epsilon^2} \left( \eta, \nabla \cdot v \right) \\
      & \ \ \ \ 
      + \left( \frac{C k}{H} u , v \right)
      + \frac{\beta}{\epsilon^2}\left( \eta , w \right) + \frac{\beta
        k}{\epsilon^2} \left( \nabla \cdot u , w \right) \\
      & \leq \| (1+Ck) u \|_{\frac{1}{H}} \|(1+Ck) v \|_{\frac{1}{H}}
      + \frac{f^* k}{\epsilon} \| u \|_{\frac{1}{H}} \| v \|_{\frac{1}{H}} \\
      & \ \ \ \
      + \frac{\beta k}{\epsilon^2} \| \eta \| \| \nabla \cdot v \|
      + \frac{\beta}{\epsilon^2} \| \eta \| \| w \|
      + \frac{\beta k}{\epsilon^2} \| \nabla \cdot u \| \| w \|.
    \end{split}
  \end{equation}
  Now, we can write
  \[
  \| u \|_{\frac{1}{H}} \leq \frac{1}{\sqrt{1+C_* k}} \| (1+C k) u\|_{\frac{1}{H}}
  \leq \| (1+C k) u\|_{\frac{1}{H}}
  \]
  and recalling that $|f^*| \leq 1$,
  \begin{equation}
    \begin{split}
      a(\bfu, \bfv) &  \leq \left(1+\frac{k}{\epsilon} \right) \| (1+Ck) u \|_{\frac{1}{H}} \| (1+Ck) v \|_{\frac{1}{H}} \\
      & \ \ \ \
            + \frac{\beta k}{\epsilon^2} \| \eta \| \| \nabla \cdot v \|
      + \frac{\beta}{\epsilon^2} \| \eta \| \| w \|
      + \frac{\beta k}{\epsilon^2} \| \nabla \cdot u \| \| w \|.
  \end{split}
  \end{equation}
  Now, we recognize the right-hand side as the dot product of
  \[
  \begin{bmatrix}
    \sqrt{(1+\frac{k}{\epsilon})} \|(1+Ck) u\|_{\frac{1}{H}} \\
    \frac{\sqrt{\beta}}{\epsilon} \| \eta \| \\
    \frac{\sqrt{\beta}}{\epsilon} \| \eta \| \\
    \frac{k\sqrt{\beta}}{\epsilon} \| \nabla \cdot u \|
  \end{bmatrix}^t
  \begin{bmatrix}
    \sqrt{(1+\frac{k}{\epsilon})} \|(1+Ck) v\|_{\frac{1}{H}} \\
    \frac{k\sqrt{\beta}}{\epsilon} \| \nabla \cdot v \| \\
    \frac{\sqrt{\beta}}{\epsilon} \| w \| \\
    \frac{\sqrt{\beta}}{\epsilon} \| w \| 
  \end{bmatrix},
  \]
whence discrete Cauchy-Schwarz gives
\begin{equation}
  \begin{split}
    a(\bfu, \bfv) & \leq
  \left[ \left(1+\frac{k}{\epsilon}\right)\|(1+Ck)u\|_{\frac{1}{H}}^2
  + \frac{k^2 \beta}{\epsilon^2} \| \nabla \cdot u \|^2
  + \frac{2\beta}{\epsilon}^2 \| \eta \|^2
  \right]^{\frac{1}{2}} \\
      & \ \ \ \
    \times    
  \left[ \left(1+\frac{k}{\epsilon}\right)\|(1+Ck)v\|_{\frac{1}{H}}^2
  + \frac{k^2 \beta}{\epsilon^2} \| \nabla \cdot v \|^2
  + \frac{2\beta}{\epsilon}^2 \| w \|^2
  \right]^{\frac{1}{2}},
  \end{split}
\end{equation}
and the result follows from a simple bound.
\end{proof}

Note that the Coriolis term $\frac{f k}{\epsilon} (u^\perp, v)$, which is skew and on the diagonal leaves the term scaled by $\frac{k}{\epsilon}$ so that we do not obtain total parameter-independence.  We can interpret this bound as saying that the Rossby number $\epsilon$ induces a time scale, independent of $h$, that must be resolved in order to obtain a robust continuity estimate.  More precisely,
\begin{corollary}
  For any $M \geq 2$ and $\epsilon > 0$, there exists $k_0$ such that for any $k \leq k_0$,
  \[
  a(\bfu, \bfv) \leq M \opnorm{\bfu} \opnorm{\bfv}.
  \]
\end{corollary}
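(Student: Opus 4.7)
The plan is to reduce the corollary directly to Theorem~\ref{thm:cont} by choosing $k_0$ so as to force the constant $K_{k,\epsilon} = \max\{2, 1 + k/\epsilon\}$ appearing there to be at most $M$. First I would observe that the hypothesis $M \geq 2$ already takes care of the ``$2$'' in the maximum; what remains is to control the $1 + k/\epsilon$ branch. Since this quantity is monotone increasing in $k$, the condition $1 + k/\epsilon \leq M$ is equivalent to $k \leq (M-1)\epsilon$, and because $M \geq 2$ the right-hand side is positive, so this defines a nontrivial range of admissible $k$.

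I would therefore set $k_0 := (M-1)\epsilon$. For any $k \leq k_0$ one then has $k/\epsilon \leq M - 1$, hence $1 + k/\epsilon \leq M$, and combining with $2 \leq M$ gives $K_{k,\epsilon} = \max\{2, 1 + k/\epsilon\} \leq M$. Applying Theorem~\ref{thm:cont} yields
\[
a(\bfu, \bfv) \leq K_{k,\epsilon} \opnorm{\bfu}\, \opnorm{\bfv} \leq M \opnorm{\bfu}\, \opnorm{\bfv},
\]
which is the claimed estimate.

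There is no real obstacle here: the corollary is essentially a restatement of Theorem~\ref{thm:cont} packaged to emphasize the physical interpretation that the Rossby number $\epsilon$ sets an intrinsic time scale $\mathcal{O}(\epsilon)$ which the time step must resolve to obtain a continuity constant near the floor value $2$. The only thing worth flagging in the write-up is that the dependence of $k_0$ on $M$ and $\epsilon$ is linear, $k_0 = (M-1)\epsilon$, so demanding a continuity constant only slightly above $2$ (i.e., $M$ close to $2$) forces a correspondingly small time step, while any $M$ substantially larger than $2$ permits $k$ of order $\epsilon$.
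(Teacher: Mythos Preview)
Your proposal is correct and is exactly the intended argument: the corollary is an immediate consequence of Theorem~\ref{thm:cont}, obtained by choosing $k_0 = (M-1)\epsilon$ so that $K_{k,\epsilon} = \max\{2, 1+k/\epsilon\} \leq M$ for all $k \leq k_0$. The paper states the corollary without proof precisely because this is the only thing to do; your additional remarks on the linear dependence $k_0 = (M-1)\epsilon$ and its physical interpretation are accurate and appropriate.
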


Next, we bound the inverse of the operator induced by $a$ by means of
an inf-sup condition.  Unlike our continuity estimate, this is completely parameter-independent.
\begin{theorem}
  \label{thm:infsup}
  The bilinear form $a$ satisfies the estimate
  \begin{equation}
    \inf_{\bfu \neq 0} \sup_{\bfv} a(\bfu, \bfv) \geq \frac{\sqrt{3}}{6}.
  \end{equation}
\end{theorem}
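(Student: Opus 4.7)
The plan is to prove the inf-sup bound constructively: for each $\bfu = (u,\eta) \neq 0$, I will exhibit an explicit test function $\bfv$ whose ratio $a(\bfu,\bfv)/\opnorm{\bfv}$ is at least $(\sqrt{3}/6)\opnorm{\bfu}$. The coercivity argument already in Section~\ref{sec:precond} tells me that testing with $\bfv = \bfu$ kills the Coriolis contribution (pointwise, since $u^\perp\cdot u = 0$) and makes the divergence cross-terms cancel, recovering $\int \tfrac{1+Ck}{H}|u|^2$ and $\|\eta\|_b^2$ but missing any control on $\|\nabla\cdot u\|$. Following the standard saddle-point augmentation trick, I would therefore test with
\[
\bfv = \bigl(u,\ \eta + k\,\nabla\cdot u\bigr),
\]
using the divergence of the primal variable (which lies in $W_h$ because of the commuting-diagram property of the mixed pair) to supply the missing divergence control in the second slot.

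The first computational step is to expand $a(\bfu,\bfv)$ and account for the cross-terms in $(\eta,\nabla\cdot u)$. Three such terms appear: $-\tfrac{\beta k}{\epsilon^2}(\eta,\nabla\cdot u)$ from the first equation, $\tfrac{\beta k}{\epsilon^2}(\eta,\nabla\cdot u)$ from pairing $\eta$ against the correction inside $w$, and $\tfrac{\beta k}{\epsilon^2}(\nabla\cdot u,\eta)$ from pairing $\nabla\cdot u$ against $\eta$. The first two cancel exactly, leaving a single residual cross-term of $\tfrac{\beta k}{\epsilon^2}(\nabla\cdot u,\eta)$. A symmetric application of Young's inequality, balancing $\tfrac{\sqrt{\beta}k}{\epsilon}\|\nabla\cdot u\|$ against $\tfrac{\sqrt{\beta}}{\epsilon}\|\eta\|$, absorbs half of this term into $\|\eta\|_b^2$ and half into $\tfrac{\beta k^2}{\epsilon^2}\|\nabla\cdot u\|^2$, producing
\[
a(\bfu,\bfv) \;\geq\; \int \tfrac{1+Ck}{H}|u|^2 + \tfrac{1}{2}\|\eta\|_b^2 + \tfrac{1}{2}\tfrac{\beta k^2}{\epsilon^2}\|\nabla\cdot u\|^2 \;\geq\; \tfrac{1}{2}\opnorm{\bfu}^2.
\]

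The second step is to control $\opnorm{\bfv}$ in terms of $\opnorm{\bfu}$. Since the $\hdiv$ slot of $\bfv$ equals $u$, only the $L^2$ component changes; applying $\|\eta + k\nabla\cdot u\|^2 \leq 2\|\eta\|^2 + 2k^2\|\nabla\cdot u\|^2$ and collecting yields $\opnorm{\bfv}^2 \leq \opnorm{\bfu}^2 + 2\|\eta\|_b^2 + 2\tfrac{\beta k^2}{\epsilon^2}\|\nabla\cdot u\|^2 \leq 3\opnorm{\bfu}^2$. Combining with the lower bound on $a(\bfu,\bfv)$,
\[
\frac{a(\bfu,\bfv)}{\opnorm{\bfv}} \;\geq\; \frac{\tfrac{1}{2}\opnorm{\bfu}^2}{\sqrt{3}\,\opnorm{\bfu}} \;=\; \frac{\sqrt{3}}{6}\,\opnorm{\bfu},
\]
which gives the claimed inf-sup constant.

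The main obstacle is calibrating the coefficient in the correction $\eta + \alpha k\,\nabla\cdot u$: replacing $\alpha=1$ by any other value throws the Young's splitting out of balance and forces a worse constant, and any choice with a different sign structure would fail to cancel two of the three cross-terms. The fact that the final constant is completely parameter-free (unlike the continuity constant, which retains a $k/\epsilon$ factor) is exactly because the Coriolis term is pointwise antisymmetric and therefore invisible to the diagonal test $v = u$, while the drag weight $(1+Ck)$ only helps on the lower bound.
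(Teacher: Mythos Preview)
Your proof is correct and follows essentially the same route as the paper: the same test function $\bfv = (u,\eta + k\nabla\cdot u)$, the same cancellation of two of the three cross-terms, the same Young's inequality yielding $a(\bfu,\bfv)\ge\tfrac12\opnorm{\bfu}^2$, and the same bound $\opnorm{\bfv}^2\le 3\opnorm{\bfu}^2$. Your closing remark slightly overstates the uniqueness of $\alpha=1$ --- other positive values would still give a parameter-free constant, just a smaller one --- but this does not affect the argument.
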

\begin{proof}
  We let $\bfu =(u, \eta)$ be given and put $\bfv = (v, w) = (u, \eta + k \nabla \cdot u)$ so that
  \begin{equation}
    \begin{split}
    a(\bfu, \bfv) & =  \left( \frac{1}{H} u , u \right)
    + \left( \frac{f k}{\epsilon H} u^\perp , u \right)
  - \frac{\beta k}{\epsilon^2} \left( \eta, \nabla \cdot u \right) \\
& \ \ \ \  + \left( \frac{C k}{H} u , u \right)
  + \frac{\beta}{\epsilon^2}\left( \eta , \eta + k \nabla \cdot u \right) + \frac{\beta
    k}{\epsilon^2} \left( \nabla \cdot u , \eta + k \nabla \cdot u \right) \\
  & = \| (1+ Ck) u \|_{\frac{1}{H}}^2 + \frac{\beta}{\epsilon^2} \| \eta \|^2
  + \frac{k^2\beta}{\epsilon^2} \| \nabla \cdot u \|^2 + \frac{k\beta}{\epsilon^2} (\eta, \nabla \cdot u). 
  \end{split}
  \end{equation}
  The last term is readily bounded below
  by $-\tfrac{\beta}{2\epsilon^2} \left( \| \eta \|^2 + k^2 \| \nabla \cdot u \|^2 \right)$ so that
  \begin{equation}
    \label{belowu2}
      a(\bfu, \bfv)  \geq \| (1+ Ck) u \|_{\frac{1}{H}}^2 + \frac{\beta}{2 \epsilon^2} \| \eta \|^2 + \frac{k^2\beta}{2 \epsilon^2} \| \nabla \cdot u \|^2      \geq \frac{1}{2} \opnorm{\bfu}^2.
  \end{equation}
  Now, we have that
  \begin{equation}
    \begin{split}
      \opnorm{\bfv}^2 & = \|(1+Ck) u\|^2_{\frac{1}{H}} + \frac{k^2 \beta}{\epsilon^2} \| \nabla \cdot u \|^2 + \frac{\beta}{\epsilon^2} \| \eta + k \nabla \cdot u \|^2 \\
      & \leq \| (1+Ck) u \|^2_{\frac{1}{H}} + 3\frac{\beta}{\epsilon^2} \| \nabla \cdot u \|^2 + 2\frac{\beta}{\epsilon^2} \| \eta \|^2 \\
      & \leq 3 \opnorm{\bfu}^2,
    \end{split}
  \end{equation}
  and combining this with~\eqref{belowu2} gives the result.
\end{proof}

Because the spectral radius of a matrix is bounded above by any natural norm, 
these results prove that the eigenvalues of $P^{-1} A$ are bounded below by a constant (in fact, $\tfrac{\sqrt{3}}{6}$) independently of the mesh size and all the physical constants.  The eigenvalues of $P^{-1} A$ are further bounded above the greater of 2 and $1+\frac{k}{\epsilon}$, which can degrade as the Rossby number decreases.   

\subsection{Dropping the damping term from the preconditioner}
In~\cite{cotter2018mixed}, we consider energy and error analysis of a possibly degenerate nonlinear damping term, where
the term $Cu$ in~\eqref{eq:thepde} is replaced by a more general
$g(u)$.  Typical use cases have a power law such as $g(u) = |u|^{p-1} u$, modified to have linear growth for large $u$ (at least as a technical assumption).  In this case, $g(u)$ tends to zero as $|u|$ does so that the effective damping decays.  

Carrying out the same manipulations that leads to~\eqref{eq:weakform} for nonlinear damping leads to the nonlinear variational form
\begin{equation}
  \label{eq:nonlinweakform}
  \begin{split}
  F(\bfu; \bfv) & =  \left( \frac{1}{H} u , v \right)
    + \left( \frac{f k}{\epsilon H} u^\perp , v \right)
    - \frac{\beta k}{\epsilon^2} \left( \eta, \nabla \cdot v \right) \\
    & \ \ \ 
  + \left( \frac{k}{H} g(u) , v \right)
  + \frac{\beta}{\epsilon^2}\left( \eta , w \right) + \frac{\beta
    k}{\epsilon^2} \left( \nabla \cdot u , w \right).
  \end{split}
\end{equation}

Newton-type methods require the Jacobian of this system.  Linearizing
about some state $\bfu_0=(u_0, \eta_0)$, we have
\begin{equation}
  \label{eq:Jac}
  \begin{split}
  J_{\bfu_0}(\bfu; \bfv) & =  \left( \frac{1}{H} u , v \right)
    + \left( \frac{f k}{\epsilon H} u^\perp , v \right)
  - \frac{\beta k}{\epsilon^2} \left( \eta, \nabla \cdot v \right) \\
& \ \ \  + \left( \frac{k}{H} g^\prime(u_0)u , v \right)
  + \frac{\beta}{\epsilon^2}\left( \eta , w \right) + \frac{\beta
    k}{\epsilon^2} \left( \nabla \cdot u , w \right).
  \end{split}
\end{equation}

All of the analysis carried out in~\cite{cotter2018mixed} required monotonicity of $g$, so that $g^\prime > 0$.  In this case,~\eqref{eq:Jac} takes the same form
as~\eqref{eq:weakform} with $C \leftrightarrow g^\prime(u_0)$.  As a result, our theory carries over directly to preconditioning each Newton step provided that the Riesz map~\eqref{eq:goodip} is updated at each iteration (of each time step).

On the other hand, many preconditioners such as algebraic multigrid can be relatively expensive to initialize, so that it is helpful to reuse the same bilinear form between successive linear solves as the damping changes.  We drop the damping term in the bilinear form in~\eqref{eq:goodip} to define
\begin{equation}
  \label{eq:lessgoodip}
  ((\bfu, \bfv))_* =
  (u, v)_{\frac{1}{H}}  + \frac{k^2 \beta}{\epsilon^2} (\nabla \cdot u, \nabla \cdot v) + \frac{\beta}{\epsilon^2} (\eta, w),
\end{equation}
and an associated norm
\begin{equation}
  \label{eq:lessgoodnorm}
  \opnorm{\bfu}_*^2 \equiv ((\bfu, \bfu)).
\end{equation}
This norm is, at the cost of some dependence on $C^*$, equivalent to $\opnorm{\cdot}$
\begin{proposition}
  For all $\bfu = (u, \eta) \in V_h \times W_h$,
  \begin{equation}
    \tfrac{1}{1+C^* k} \opnorm{\bfu}^2 \leq \opnorm{\bfu}_*^2 \leq \opnorm{\bfu}^2  
  \end{equation}
\end{proposition}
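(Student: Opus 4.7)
The plan is to prove the asserted norm equivalence by a direct term-by-term comparison of $\opnorm{\bfu}^2$ and $\opnorm{\bfu}_*^2$. Reading the definitions~\eqref{eq:lessgoodip}--\eqref{eq:lessgoodnorm} alongside the weighted norm built from~\eqref{eq:goodip}, the divergence contribution $\tfrac{k^2\beta}{\epsilon^2}\|\nabla\cdot u\|^2$ and the elevation contribution $\tfrac{\beta}{\epsilon^2}\|\eta\|^2$ are common to both norms, so the entire comparison reduces to controlling the $V_h$-mass term $\|(1+Ck)u\|_{\frac{1}{H}}^2$ against its undamped counterpart $\|u\|_{\frac{1}{H}}^2$. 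All the work is pointwise-in-$\bfx$ on the weight $1+Ck$ followed by integration against the nonnegative measure $\tfrac{|u|^2}{H}\,d\bfx$.

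For the upper bound $\opnorm{\bfu}_*^2 \leq \opnorm{\bfu}^2$, I would use that $C(\bfx)\geq 0$ and $k>0$ together imply the pointwise inequality $1\leq 1+C(\bfx)k$. Integrating against the positive weight gives $\|u\|_{\frac{1}{H}}^2 \leq \|(1+Ck)u\|_{\frac{1}{H}}^2$, and adding the shared divergence and elevation terms to both sides yields the stated upper bound with constant $1$.

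For the lower bound $\opnorm{\bfu}^2 \leq (1+C^*k)\opnorm{\bfu}_*^2$, I would invoke the assumed upper bound $C(\bfx)\leq C^*$ to get the pointwise inequality $1+Ck \leq 1+C^*k$, so that integration produces $\|(1+Ck)u\|_{\frac{1}{H}}^2 \leq (1+C^*k)\|u\|_{\frac{1}{H}}^2$. Summing in the common divergence and elevation terms gives
\[
\opnorm{\bfu}^2 \;\leq\; (1+C^*k)\|u\|_{\frac{1}{H}}^2 + \tfrac{k^2\beta}{\epsilon^2}\|\nabla\cdot u\|^2 + \tfrac{\beta}{\epsilon^2}\|\eta\|^2.
\]
Because $1+C^*k\geq 1$, each of the two unweighted terms on the right is bounded above by $(1+C^*k)$ times itself, so the full right-hand side is dominated by $(1+C^*k)\opnorm{\bfu}_*^2$, and dividing through produces the claimed lower bound.

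There is no serious obstacle: the proposition is essentially the statement that the damping weight is pointwise pinched between $1$ and $1+C^*k$, and on a positive measure this pinching transfers to the weighted $L^2$ norm while leaving the divergence and $\eta$ blocks untouched. The only subtlety worth noting is that the equivalence does not use the inverse assumption or any finite element structure, so it holds for all $\bfu \in \hdiv\times L^2$, not merely on $V_h\times W_h$; this will be convenient later when one wishes to transfer the continuity and inf-sup bounds of Theorems~\ref{thm:cont}--\ref{thm:infsup} to the cheaper preconditioner built from $((\cdot,\cdot))_*$, at the cost of an extra factor of $1+C^*k$ in the condition number estimate.
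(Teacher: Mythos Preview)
Your argument is correct and matches the paper's own (one-line) proof, which simply invokes the pointwise inequalities $\tfrac{1+Ck}{1+C^*k} \leq 1 \leq 1+Ck$ in the definition of the inner product; you have merely written out what those two inequalities buy after integration against $\tfrac{|u|^2}{H}\,d\bfx$ and adding the common divergence and $\eta$ contributions. Your closing remark that the equivalence holds on all of $\hdiv\times L^2$ and feeds directly into the corollary transferring Theorems~\ref{thm:cont}--\ref{thm:infsup} to $\opnorm{\cdot}_*$ is exactly how the paper uses the result.
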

\begin{proof}
  The proof is elementary and uses that $\tfrac{1+Ck}{1+C^*k} \leq 1 \leq 1+Ck$ in the definition of $((\cdot, \cdot))$. 
\end{proof}
Theorems~\ref{thm:cont} and~\ref{thm:infsup} can be readily restated using this norm:
\begin{corollary}
  For all $\bfu, \bfv \in \hdiv \times L^2$,
  \begin{equation}
    a(\bfu, \bfv) \leq K_* \opnorm{\bfu}_* \opnorm{\bfv}_*,
  \end{equation}
  where $K_* = (1+C^*k) \max\{2, 1+\tfrac{k}{\epsilon} \}$, and the inf-sup constant of $a$ with respect to $\opnorm{\cdot}_*$ is also at least $\tfrac{\sqrt{3}}{6}$.
\end{corollary}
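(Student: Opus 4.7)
The plan is to deduce both halves of the corollary directly from the preceding norm-equivalence proposition together with Theorems~\ref{thm:cont} and~\ref{thm:infsup}. The proposition gives $\opnorm{\bfw}_* \leq \opnorm{\bfw} \leq \sqrt{1+C^*k}\,\opnorm{\bfw}_*$ for every $\bfw \in V_h \times W_h$, and each of the two halves of the corollary asks only for one direction of this equivalence.

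For the continuity estimate I would apply Theorem~\ref{thm:cont} to obtain $a(\bfu,\bfv) \leq K\opnorm{\bfu}\opnorm{\bfv}$ with $K=\max\{2,1+k/\epsilon\}$, then substitute the upper half of the equivalence, $\opnorm{\bfw} \leq \sqrt{1+C^*k}\,\opnorm{\bfw}_*$, on both arguments. Multiplying the two $\sqrt{1+C^*k}$ factors collapses cleanly into a single factor of $1+C^*k$, giving $a(\bfu,\bfv) \leq K(1+C^*k)\opnorm{\bfu}_*\opnorm{\bfv}_* = K_*\opnorm{\bfu}_*\opnorm{\bfv}_*$, exactly as claimed.

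For the inf-sup estimate I would, for each $\bfu \neq 0$, invoke Theorem~\ref{thm:infsup} to produce a $\bfv$ satisfying $a(\bfu,\bfv) \geq \tfrac{\sqrt{3}}{6}\opnorm{\bfu}\opnorm{\bfv}$. Because $\opnorm{\cdot}_*$ is the \emph{smaller} of the two norms, the same $\bfv$ automatically satisfies $a(\bfu,\bfv) \geq \tfrac{\sqrt{3}}{6}\opnorm{\bfu}_*\opnorm{\bfv}_*$ with no penalty in the constant, and taking supremum over $\bfv$ and infimum over $\bfu$ preserves the $\sqrt{3}/6$ lower bound. The asymmetry—only continuity pays a $(1+C^*k)$ factor—reflects the elementary fact that replacing a norm by a smaller one only helps a lower bound. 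I do not foresee any substantive obstacle; the proof reduces to applying the appropriate direction of the norm equivalence in each half, and the two together give the stated constants.
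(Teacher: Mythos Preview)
Your proposal is correct and is exactly the approach the paper intends: the corollary is presented without proof as a ``readily restated'' consequence of Theorems~\ref{thm:cont} and~\ref{thm:infsup} via the norm-equivalence proposition, and you have supplied precisely that derivation. The only cosmetic point is that the proposition is stated on $V_h\times W_h$ while the corollary is on $\hdiv\times L^2$, but the pointwise bound $1\leq 1+Ck\leq 1+C^*k$ used in its proof applies verbatim on the full space, so this causes no difficulty.
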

Typically, the linear damping is small compared to the other effects in the equation so that the effective bounds on the preconditioner are essentially unchanged.  Much as with the Rossby number, the time step can be reduced to accommodate large $C^*$ if it becomes a problem.

\section{Numerical results}
\label{sec:numres}
We have implemented a mixed finite element discretization of the tide model and developed all of our preconditioners within the Firedrake framework~\cite{Rathgeber:2016}.  Firedrake is an automated system for the solution of PDE using the finite element method.  It allows users to specify the variational form of their problems using the Unifed Form Language (UFL) in Python~\cite{alnaes2014unified}, generates efficient low-level code for the evaluation of operators, and interfaces tightly with PETSc for scalable algebraic solvers.  Firedrake also allows users to specify UFL for preconditioning operator that is distinct from that for the problem being solved, and we make use of this facility.  A sample listing is shown in Figure~\ref{fig:code}.

\begin{figure}[htbp]
  \centering
  \begin{lstlisting}
    from firedrake import *

    mesh = UnitSquareMesh(16, 16)
    V = FunctionSpace(mesh, "RT", 1)
    Q = FunctionSpace(mesh, "DG", 0)

    Z = V * Q

    x, y = SpatialCoordinate(mesh)
    k = Constant(k)
    Eps = Constant(eps)
    Beta = Constant(0.1)
    C = Constant(1.0)
    f = Constant(1.0)
    beps2 = Beta / Eps**2

    up = Function(Z)
    v, q = TestFunctions(Z)

    u, p = split(up)

    F = (inner(u, v) * dx
        + k / Eps * f * inner(perp(u),v) * dx
        - k * beps2 * inner(p, div(v)) * dx
        + C * k * inner(u,v) * dx
        + beps2 * inner(p, q) * dx
        + k * beps2 * inner(div(u), q) * dx
        - beps2 * inner(sin(pi*x)*cos(pi*y),q)*dx)

    uu, pp = TrialFunctions(Z)
    Jpc = ((Constant(1.0) + C * k) * inner(uu,v)*dx
            + k**2 * beps2 * inner(div(uu),div(v)) *dx
            + beps2 * inner(pp,q)*dx)
        
    bcs = [DirichletBC(Z.sub(0), 0, 'on_boundary')]

    solve(F==0, up, bcs=bcs, Jp=Jpc)
    solver.solve()
  \end{lstlisting}
  \caption{Sample Firedrake code for solving the tide model using the Riesz map~\eqref{eq:goodip} as a preconditioner.  The user can optionally pass a Python dictionary containing PETSc options into the solve function.}
  \label{fig:code}
\end{figure}

In our initial experiments, we test the results of Theorems~\ref{thm:cont} and~\ref{thm:infsup}.  We test our methods on a simple square domain and put damping coefficient $C$, Coriolis parameter $f$, and bathymetry $H$ all equal 1, the Burger number $\beta=0.1$ and Rossby number $\epsilon = 0.1$.  Initially, we divide the unit square into an $N \times N$ mesh of squares, each subdivided into two right triangles, and use $V_h \times W_h$ as the lowest-order Raviart-Thomas and discontinuous piecewise constant spaces.  We vary the mesh size $h$, and the time step $k$ over relatively wide ranges, as shown in the figures below.  In each case, we solve the linear system using GMRES preconditioned with $P$ from~\eqref{eq:goodp} as well as the discretization of the simplified inner product in~\eqref{eq:lessgoodip}.  To verify the efficacy of each $P$, we simply apply $P^{-1}$ using a sparse direct factorization at first (see Figure~\ref{fig:opts1} for the solver dictionary we pass to \texttt{solve} in that case).   These results are plotted in Figure~\ref{fig:rt1trivaryingmesh}.  We observe that for each $k$, the iteration count seems to be bounded above independent of the mesh size.  For the moderately-sized $k$, the iteration count seems to be larger than for either very small or very large $k$, which means the iterations also seem to vary beneath a $k$-independent bound as well.  Comparing Figure~\ref{rt1trihp} to~\ref{rt1trihpred}, we see that removing the damping term from the preconditioner leads to a possible slight increase in iteration count.

\begin{figure}[htbp]
  \centering
  \begin{lstlisting}
param_lu = {"mat_type": "aij",
            "snes_type": "ksponly",
            "ksp_type": "gmres",
            "ksp_gmres_restart": 100,
            "pc_type": "lu"
            }
  \end{lstlisting}
  \caption{Firedrake solver parameter dictionary, internally mapped to PETSc options, indicating that the matrices will be assembled in standard sparse format, that the problem is linear (bypassing Newton), setting GMRES as the Krylov solver, and applying the inverse of the preconditioning matrix via LU factorization.}
  \label{fig:opts1}
\end{figure}

Nothing in our analysis depended on the particularities of the
approximating space, and we repeated the experiment for
$RT_2 \times dP_1$~(Figure~\ref{fig:rt2trivaryingmesh}) and
$RTc_1 \times dQ_0$~(Figure~\ref{fig:rt1quadvaryingmesh}).  The results
have the same flavor and differ only slightly in the particular
iteration counts compared to Figure~\ref{fig:rt1trivaryingmesh}.

\begin{figure}[h]
        \begin{subfigure}[l]{0.475\textwidth}
                \begin{tikzpicture}[scale=0.88]
                \begin{semilogxaxis}[xlabel={$N$}, ylabel={Iterations},
                log basis x=2,
                ylabel near ticks,
                x tick label style={font=\tiny}]
                \addplot[dashed,mark=square*,mark options={solid,fill=gray}] table [x = N, y = iterations, col sep=comma]
                {tri.Riesz.deg1.k1.0.csv};
                \addplot[dashdotted,mark=triangle,mark options={solid}] table [x = N, y = iterations, col sep=comma]
                {tri.Riesz.deg1.k0.1.csv};
                \addplot[dotted,mark=square*, mark options={solid,fill}] table [x = N, y = iterations, col sep=comma]
                {tri.Riesz.deg1.k0.01.csv};
                \addplot[dash dot dot,mark=triangle, mark options={solid,fill}] table [x = N, y = iterations, col sep=comma]
                {tri.Riesz.deg1.k0.001.csv};
                \addplot[dotted,mark=square, mark options={solid}] table [x = N, y = iterations, col sep=comma]
                {tri.Riesz.deg1.k0.0001.csv};
                \addplot[dashdotted,mark=x, mark options={solid}] table [x = N, y = iterations, col sep=comma]
                {tri.Riesz.deg1.k1e-05.csv};
                \addplot[dash dot dot,mark=o, mark options={solid,fill}] table [x = N, y = iterations, col sep=comma]
                {tri.Riesz.deg1.k1e-06.csv};
                \end{semilogxaxis}
                \end{tikzpicture}
                \caption{Using the bilinear form~\eqref{eq:goodip} (includes damping) as a preconditioner}
                \label{rt1trihp}
       \end{subfigure}
        \hspace{0.04\textwidth}
        \begin{subfigure}[r]{0.475\textwidth}
                \begin{tikzpicture}[scale=0.88]
                \begin{semilogxaxis}[xlabel={$N$},
                log basis x=2,
                ylabel near ticks,
                x tick label style={font=\tiny},
                legend cell align=left,
                legend style={overlay, at={(-0.16, 1.05)}, anchor=south},
                legend columns=9]
                \addlegendimage{empty legend}
                \addlegendentry[text width=20pt,text depth=]
                {$k =$};
                \addplot[dashed,mark=square*,mark options={solid,fill=gray}] table [x = N, y = iterations, col sep=comma]
                {tri.Riesz_Lite.deg1.k1.0.csv};
                \addlegendentry{$10^0$}
                \addplot[dashdotted,mark=triangle,mark options={solid}] table [x = N, y = iterations, col sep=comma]
                {tri.Riesz_Lite.deg1.k0.1.csv};
                \addlegendentry{$10^{-1}$}
                \addplot[dotted,mark=square*, mark options={solid,fill}] table [x = N, y = iterations, col sep=comma]
                {tri.Riesz_Lite.deg1.k0.01.csv};
                \addlegendentry{$10^{-2}$}
                \addplot[dash dot dot,mark=triangle, mark options={solid,fill}] table [x = N, y = iterations, col sep=comma]
                {tri.Riesz_Lite.deg1.k0.001.csv};
                \addlegendentry{$10^{-3}$}
                \addplot[dotted,mark=square, mark options={solid}] table [x = N, y = iterations, col sep=comma]
                {tri.Riesz_Lite.deg1.k0.0001.csv};
                \addlegendentry{$10^{-4}$}
                \addplot[dashdotted,mark=x, mark options={solid}] table [x = N, y = iterations, col sep=comma]
                {tri.Riesz_Lite.deg1.k1e-05.csv};
                \addlegendentry{$10^{-5}$}
                \addplot[dash dot dot,mark=o, mark options={solid,fill}] table [x = N, y = iterations, col sep=comma]
               {tri.Riesz_Lite.deg1.k1e-06.csv};
                \addlegendentry{$10^{-6}$}
                \end{semilogxaxis}
                \end{tikzpicture}
                \caption{Using the bilinear form~\eqref{eq:lessgoodip} (without damping) as a preconditioner}
                \label{rt1trihpred}
        \end{subfigure}
        \caption{Iteration count versus mesh refinement under various $k$ values for $C=f=1$, $\beta=0.1$, and $\epsilon=0.01$.  The unit square is divided into an $N \times N$ mesh of squares, each subdivided into two right triangles.  Lowest-order Raviart-Thomas discretization is used. The iteration counts are largest for moderate $k$ and decrease as $k$ is either very large or small.  Also, removing the damping term (right) from the weighted inner product leads to a small increase in iteration count. }  
        \label{fig:rt1trivaryingmesh}
\end{figure}
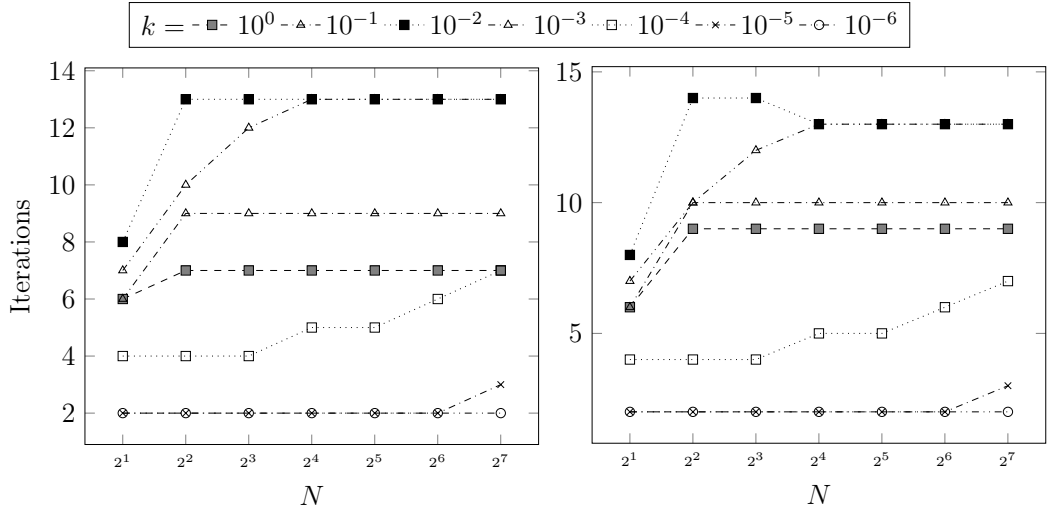

\vspace{1.0in}

\begin{figure}[h]
        \begin{subfigure}[l]{0.475\textwidth}
                \begin{tikzpicture}[scale=0.88]
                \begin{semilogxaxis}[xlabel={$N$}, ylabel={Iterations},
                log basis x=2,
                ylabel near ticks,
                x tick label style={font=\tiny}]
                \addplot[dashed,mark=square*,mark options={solid,fill=gray}] table [x = N, y = iterations, col sep=comma]
                {tri.Riesz.deg2.k1.0.csv};
                \addplot[dashdotted,mark=triangle,mark options={solid}] table [x = N, y = iterations, col sep=comma]
                {tri.Riesz.deg2.k0.1.csv};
                \addplot[dotted,mark=square*, mark options={solid,fill}] table [x = N, y = iterations, col sep=comma]
                {tri.Riesz.deg2.k0.01.csv};
                \addplot[dash dot dot,mark=triangle, mark options={solid,fill}] table [x = N, y = iterations, col sep=comma]
                {tri.Riesz.deg2.k0.001.csv};
                \addplot[dotted,mark=square, mark options={solid}] table [x = N, y = iterations, col sep=comma]
                {tri.Riesz.deg2.k0.0001.csv};
                \addplot[dashdotted,mark=x, mark options={solid}] table [x = N, y = iterations, col sep=comma]
                {tri.Riesz.deg2.k1e-05.csv};
                \addplot[dash dot dot,mark=o, mark options={solid,fill}] table [x = N, y = iterations, col sep=comma]
                {tri.Riesz.deg2.k1e-06.csv};
                \end{semilogxaxis}
                \end{tikzpicture}
                \caption{Using the bilinear form~\eqref{eq:goodip} (includes damping) as a preconditioner}
       \end{subfigure}
        \hspace{0.04\textwidth}
        \begin{subfigure}[r]{0.475\textwidth}
                \begin{tikzpicture}[scale=0.88]
                \begin{semilogxaxis}[xlabel={$N$},
                log basis x=2,
                ylabel near ticks,
                x tick label style={font=\tiny},
                legend cell align=left,
                legend style={overlay, at={(-0.16, 1.05)}, anchor=south},
                legend columns=9]
                \addlegendimage{empty legend}
                \addlegendentry[text width=20pt,text depth=]
                {$k =$};
                \addplot[dashed,mark=square*,mark options={solid,fill=gray}] table [x = N, y = iterations, col sep=comma]
                {tri.Riesz_Lite.deg2.k1.0.csv};
                \addlegendentry{$10^0$}
                \addplot[dashdotted,mark=triangle,mark options={solid}] table [x = N, y = iterations, col sep=comma]
                {tri.Riesz_Lite.deg2.k0.1.csv};
                \addlegendentry{$10^{-1}$}
                \addplot[dotted,mark=square*, mark options={solid,fill}] table [x = N, y = iterations, col sep=comma]
                {tri.Riesz_Lite.deg2.k0.01.csv};
                \addlegendentry{$10^{-2}$}
                \addplot[dash dot dot,mark=triangle, mark options={solid,fill}] table [x = N, y = iterations, col sep=comma]
                {tri.Riesz_Lite.deg2.k0.001.csv};
                \addlegendentry{$10^{-3}$}
                \addplot[dotted,mark=square, mark options={solid}] table [x = N, y = iterations, col sep=comma]
                {tri.Riesz_Lite.deg2.k0.0001.csv};
                \addlegendentry{$10^{-4}$}
                \addplot[dashdotted,mark=x, mark options={solid}] table [x = N, y = iterations, col sep=comma]
                {tri.Riesz_Lite.deg2.k1e-05.csv};
                \addlegendentry{$10^{-5}$}
                \addplot[dash dot dot,mark=o, mark options={solid,fill}] table [x = N, y = iterations, col sep=comma]
               {tri.Riesz_Lite.deg2.k1e-06.csv};
                \addlegendentry{$10^{-6}$}
                \end{semilogxaxis}
                \end{tikzpicture}
                \caption{Using the bilinear form~\eqref{eq:lessgoodip} (without damping) as a preconditioner}
        \end{subfigure}
        \caption{Experiment in Figure~\ref{fig:rt1trivaryingmesh} is repeated, except with the next-to-lowest Raviart-Thomas elements.  Since the bilinear form~\eqref{eq:goodip} is also discretized in this space, very little changes relative to the lowest-order case.}  
        \label{fig:rt2trivaryingmesh}
\end{figure}
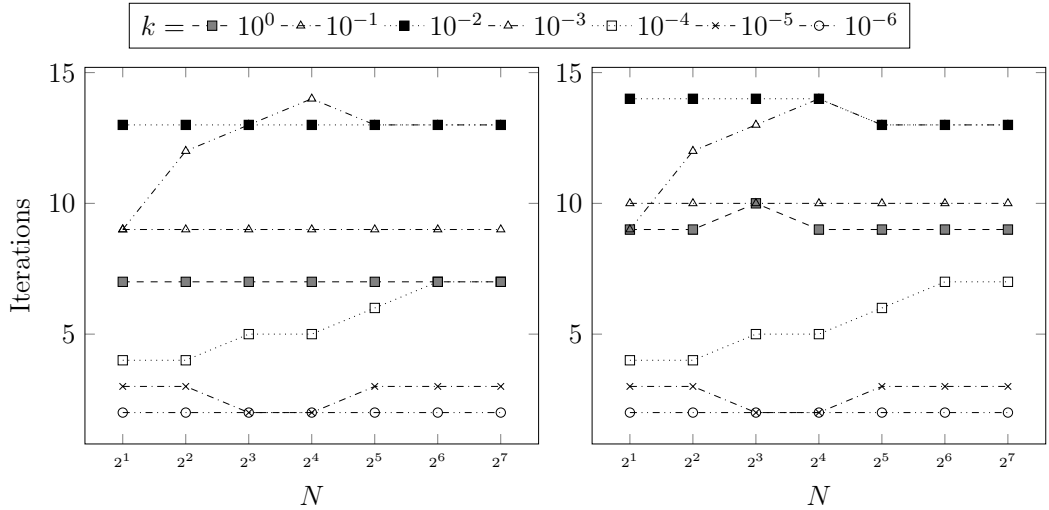

\begin{figure}[h]
        \begin{subfigure}[l]{0.475\textwidth}
                \begin{tikzpicture}[scale=0.88]
                \begin{semilogxaxis}[xlabel={$N$}, ylabel={Iterations},
                log basis x=2,
                ylabel near ticks,
                x tick label style={font=\tiny}]
                \addplot[dashed,mark=square*,mark options={solid,fill=gray}] table [x = N, y = iterations, col sep=comma]
                {tri.Riesz.deg2.k1.0.csv};
                \addplot[dashdotted,mark=triangle,mark options={solid}] table [x = N, y = iterations, col sep=comma]
                {tri.Riesz.deg2.k0.1.csv};
                \addplot[dotted,mark=square*, mark options={solid,fill}] table [x = N, y = iterations, col sep=comma]
                {tri.Riesz.deg2.k0.01.csv};
                \addplot[dash dot dot,mark=triangle, mark options={solid,fill}] table [x = N, y = iterations, col sep=comma]
                {tri.Riesz.deg2.k0.001.csv};
                \addplot[dotted,mark=square, mark options={solid}] table [x = N, y = iterations, col sep=comma]
                {tri.Riesz.deg2.k0.0001.csv};
                \addplot[dashdotted,mark=x, mark options={solid}] table [x = N, y = iterations, col sep=comma]
                {tri.Riesz.deg2.k1e-05.csv};
                \addplot[dash dot dot,mark=o, mark options={solid,fill}] table [x = N, y = iterations, col sep=comma]
                {tri.Riesz.deg2.k1e-06.csv};
                \end{semilogxaxis}
                \end{tikzpicture}
                \caption{Using the bilinear form~\eqref{eq:goodip} (includes damping) as a preconditioner}
       \end{subfigure}
        \hspace{0.04\textwidth}
        \begin{subfigure}[r]{0.475\textwidth}
                \begin{tikzpicture}[scale=0.88]
                \begin{semilogxaxis}[xlabel={$N$},
                log basis x=2,
                ylabel near ticks,
                x tick label style={font=\tiny},
                legend cell align=left,
                legend style={overlay, at={(-0.16, 1.05)}, anchor=south},
                legend columns=9]
                \addlegendimage{empty legend}
                \addlegendentry[text width=20pt,text depth=]
                {$k =$};
                \addplot[dashed,mark=square*,mark options={solid,fill=gray}] table [x = N, y = iterations, col sep=comma]
                {tri.Riesz_Lite.deg2.k1.0.csv};
                \addlegendentry{$10^0$}
                \addplot[dashdotted,mark=triangle,mark options={solid}] table [x = N, y = iterations, col sep=comma]
                {tri.Riesz_Lite.deg2.k0.1.csv};
                \addlegendentry{$10^{-1}$}
                \addplot[dotted,mark=square*, mark options={solid,fill}] table [x = N, y = iterations, col sep=comma]
                {tri.Riesz_Lite.deg2.k0.01.csv};
                \addlegendentry{$10^{-2}$}
                \addplot[dash dot dot,mark=triangle, mark options={solid,fill}] table [x = N, y = iterations, col sep=comma]
                {tri.Riesz_Lite.deg2.k0.001.csv};
                \addlegendentry{$10^{-3}$}
                \addplot[dotted,mark=square, mark options={solid}] table [x = N, y = iterations, col sep=comma]
                {tri.Riesz_Lite.deg2.k0.0001.csv};
                \addlegendentry{$10^{-4}$}
                \addplot[dashdotted,mark=x, mark options={solid}] table [x = N, y = iterations, col sep=comma]
                {tri.Riesz_Lite.deg2.k1e-05.csv};
                \addlegendentry{$10^{-5}$}
                \addplot[dash dot dot,mark=o, mark options={solid,fill}] table [x = N, y = iterations, col sep=comma]
               {tri.Riesz_Lite.deg2.k1e-06.csv};
                \addlegendentry{$10^{-6}$}
                \end{semilogxaxis}
                \end{tikzpicture}
                \caption{Using the bilinear form~\eqref{eq:lessgoodip} (without damping) as a preconditioner}
        \end{subfigure}
        \caption{Experiment in Figure~\ref{fig:rt1trivaryingmesh} is again repeated, except with lowest Raviart-Thomas elements on squares.  Again, little changes relative to the triangular case.}  
        \label{fig:rt1quadvaryingmesh}
\end{figure}
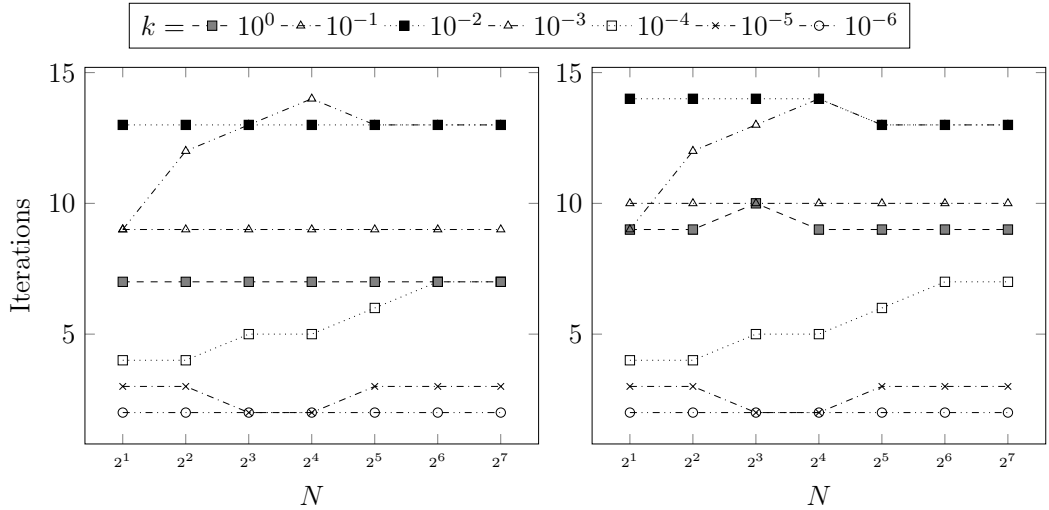

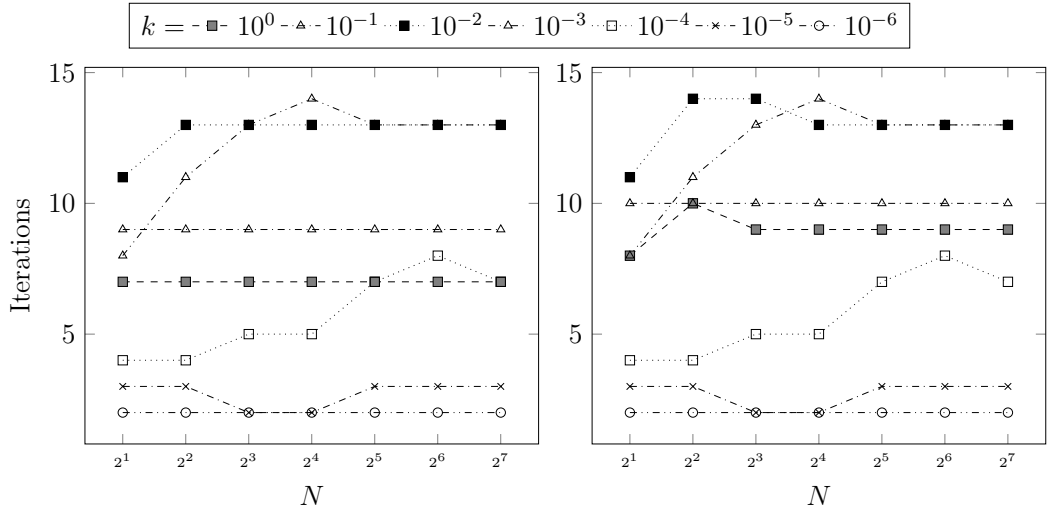
\begin{figure}[h]
        \begin{subfigure}[l]{0.475\textwidth}
                \begin{tikzpicture}[scale=0.88]
                \begin{semilogxaxis}[xlabel={$N$}, ylabel={Iterations},
                log basis x=2,
                ylabel near ticks,
                x tick label style={font=\tiny}]
                \addplot[dashed,mark=square*,mark options={solid,fill=gray}] table [x = N, y = iterations, col sep=comma]
                {Sminus.Riesz.deg2.k1.0.csv};
                \addplot[dashdotted,mark=triangle,mark options={solid}] table [x = N, y = iterations, col sep=comma]
                {Sminus.Riesz.deg2.k0.1.csv};
                \addplot[dotted,mark=square*, mark options={solid,fill}] table [x = N, y = iterations, col sep=comma]
                {Sminus.Riesz.deg2.k0.01.csv};
                \addplot[dash dot dot,mark=triangle, mark options={solid,fill}] table [x = N, y = iterations, col sep=comma]
                {Sminus.Riesz.deg2.k0.001.csv};
                \addplot[dotted,mark=square, mark options={solid}] table [x = N, y = iterations, col sep=comma]
                {Sminus.Riesz.deg2.k0.0001.csv};
                \addplot[dashdotted,mark=x, mark options={solid}] table [x = N, y = iterations, col sep=comma]
                {Sminus.Riesz.deg2.k1e-05.csv};
                \addplot[dash dot dot,mark=o, mark options={solid,fill}] table [x = N, y = iterations, col sep=comma]
                {Sminus.Riesz.deg2.k1e-06.csv};
                \end{semilogxaxis}
                \end{tikzpicture}
                \caption{Using the bilinear form~\eqref{eq:goodip} (includes damping) as a preconditioner}
       \end{subfigure}
        \hspace{0.04\textwidth}
        \begin{subfigure}[r]{0.475\textwidth}
                \begin{tikzpicture}[scale=0.88]
                \begin{semilogxaxis}[xlabel={$N$},
                log basis x=2,
                ylabel near ticks,
                x tick label style={font=\tiny},
                legend cell align=left,
                legend style={overlay, at={(-0.16, 1.05)}, anchor=south},
                legend columns=9]
                \addlegendimage{empty legend}
                \addlegendentry[text width=20pt,text depth=]
                {$k =$};
                \addplot[dashed,mark=square*,mark options={solid,fill=gray}] table [x = N, y = iterations, col sep=comma]
                {Sminus.Riesz_Lite.deg2.k1.0.csv};
                \addlegendentry{$10^0$}
                \addplot[dashdotted,mark=triangle,mark options={solid}] table [x = N, y = iterations, col sep=comma]
                {Sminus.Riesz_Lite.deg2.k0.1.csv};
                \addlegendentry{$10^{-1}$}
                \addplot[dotted,mark=square*, mark options={solid,fill}] table [x = N, y = iterations, col sep=comma]
                {Sminus.Riesz_Lite.deg2.k0.01.csv};
                \addlegendentry{$10^{-2}$}
                \addplot[dash dot dot,mark=triangle, mark options={solid,fill}] table [x = N, y = iterations, col sep=comma]
                {Sminus.Riesz_Lite.deg2.k0.001.csv};
                \addlegendentry{$10^{-3}$}
                \addplot[dotted,mark=square, mark options={solid}] table [x = N, y = iterations, col sep=comma]
                {Sminus.Riesz_Lite.deg2.k0.0001.csv};
                \addlegendentry{$10^{-4}$}
                \addplot[dashdotted,mark=x, mark options={solid}] table [x = N, y = iterations, col sep=comma]
                {Sminus.Riesz_Lite.deg2.k1e-05.csv};
                \addlegendentry{$10^{-5}$}
                \addplot[dash dot dot,mark=o, mark options={solid,fill}] table [x = N, y = iterations, col sep=comma]
               {Sminus.Riesz_Lite.deg2.k1e-06.csv};
                \addlegendentry{$10^{-6}$}
                \end{semilogxaxis}
                \end{tikzpicture}
                \caption{Using the bilinear form~\eqref{eq:lessgoodip} (without damping) as a preconditioner}
        \end{subfigure}
        \caption{Experiment in Figure~\ref{fig:rt1trivaryingmesh} is again repeated, except second-order trimmed serendipity elements on squares.  Again, little changes relative to the triangular case.}  
        \label{fig:sminus2quadvaryingmesh}
\end{figure}

We perform a second set of experiments, now fixing the mesh at $N=128$ and studying the iteration count as function of $\epsilon$ and $k$.  These results are shown in Figures~\ref{fig:rt1triepsiteration},\ref{fig:rt2triepsiteration},\ref{fig:rt1quadepsiteration}.  This shows that, for fixed $k$, increasing $\epsilon$ also increases the iteration count.  On the other hand, for fixed $\epsilon$, one finds the largest iteration counts for intermediate values of the time step.  Much as the mesh-dependence study, we remark that varying the discretization order and cell shape has little effect on the results.

\begin{figure}[h]
        \begin{subfigure}[l]{0.475\textwidth}
                \begin{tikzpicture}[scale=0.88]
                \begin{semilogxaxis}[xlabel={$k$}, ylabel={Iterations},
                log basis x=10,
                ylabel near ticks,
                x tick label style={font=\tiny}]
                \%addlegendentry[text width=20pt,text depth=]
                \addplot[dotted,mark=otimes,mark options={solid, fill=gray}] table [x = k, y = iterations, col sep=comma]
                {Riesz.deg1.eps0.1.csv};
                \addplot[dashed,mark=*,mark options={solid,fill=gray}] table [x = k, y = iterations, col sep=comma]
                {Riesz.deg1.eps0.01.csv};
                \addplot[dashdotted,mark=o,mark options={solid}] table [x = k, y = iterations, col sep=comma]
                {Riesz.deg1.eps0.001.csv};
                \end{semilogxaxis}
                \end{tikzpicture}
                \caption{Using the bilinear form~\eqref{eq:goodip} (includes damping) as a preconditioner}
        \end{subfigure}
        \hspace{0.04\textwidth}
        \begin{subfigure}[r]{0.475\textwidth}
                \begin{tikzpicture}[scale=0.88]
                \begin{semilogxaxis}[xlabel={$k$},
                log basis x=10,
                ylabel near ticks,
                x tick label style={font=\tiny},
                legend cell align=left,
                legend style={overlay, at={(-0.16, 1.05)}, anchor=south},
                legend columns=5]
                \addlegendimage{empty legend}
                \addlegendentry[text width=20pt,text depth=]
                {$\epsilon =$};
                \addplot[dotted,mark=otimes,mark options={solid, fill=gray}] table [x = k, y = iterations, col sep=comma]
                {Riesz_Lite.deg1.eps0.1.csv};
                \addlegendentry{$0.1$}
                \addplot[dashed,mark=*,mark options={solid,fill=gray}] table [x = k, y = iterations, col sep=comma]
                {Riesz_Lite.deg1.eps0.01.csv};
                \addlegendentry{$0.01$}
                \addplot[dashdotted,mark=o,mark options={solid}] table [x = k, y = iterations, col sep=comma]
                {Riesz_Lite.deg1.eps0.001.csv};
                \addlegendentry{$0.001$}
                \end{semilogxaxis}
                \end{tikzpicture}
                \caption{Using the bilinear form~\eqref{eq:lessgoodip} (without damping) as a preconditioner}
        \end{subfigure}
  \caption{Iteration count with weighted-norm preconditioning as a function of $k$ and $\epsilon$ on a $128\times 128$ mesh divided into right triangles using lowest-order Raviart-Thomas elements.  Note that for a fixed $k$, the iteration count increases with decreasing $\epsilon$.  As in Figure~\ref{fig:rt1trivaryingmesh}, removing the damping term from the preconditioner leads to a very slight increase in iteration count.}
  \label{fig:rt1triepsiteration}
\end{figure}
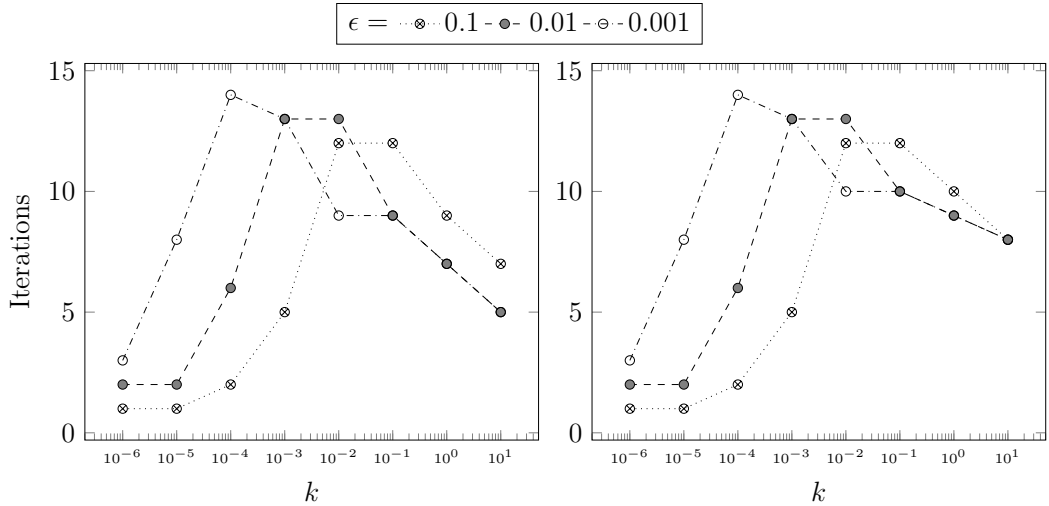

\begin{figure}[h]
        \begin{subfigure}[l]{0.475\textwidth}
                \begin{tikzpicture}[scale=0.88]
                \begin{semilogxaxis}[xlabel={$k$}, ylabel={Iterations},
                log basis x=10,
                ylabel near ticks,
                x tick label style={font=\tiny}]
                \%addlegendentry[text width=20pt,text depth=]
                \addplot[dotted,mark=otimes,mark options={solid, fill=gray}] table [x = k, y = iterations, col sep=comma]
                {Riesz.deg2.eps0.1.csv};
                \addplot[dashed,mark=*,mark options={solid,fill=gray}] table [x = k, y = iterations, col sep=comma]
                {Riesz.deg2.eps0.01.csv};
                \addplot[dashdotted,mark=o,mark options={solid}] table [x = k, y = iterations, col sep=comma]
                {Riesz.deg2.eps0.001.csv};
                \end{semilogxaxis}
                \end{tikzpicture}
                \caption{Using the bilinear form~\eqref{eq:goodip} (includes damping) as a preconditioner}
        \end{subfigure}
        \hspace{0.04\textwidth}
        \begin{subfigure}[r]{0.475\textwidth}
                \begin{tikzpicture}[scale=0.88]
                \begin{semilogxaxis}[xlabel={$k$},
                log basis x=10,
                ylabel near ticks,
                x tick label style={font=\tiny},
                legend cell align=left,
                legend style={overlay, at={(-0.16, 1.05)}, anchor=south},
                legend columns=5]
                \addlegendimage{empty legend}
                \addlegendentry[text width=20pt,text depth=]
                {$\epsilon =$};
                \addplot[dotted,mark=otimes,mark options={solid, fill=gray}] table [x = k, y = iterations, col sep=comma]
                {Riesz_Lite.deg2.eps0.1.csv};
                \addlegendentry{$0.1$}
                \addplot[dashed,mark=*,mark options={solid,fill=gray}] table [x = k, y = iterations, col sep=comma]
                {Riesz_Lite.deg2.eps0.01.csv};
                \addlegendentry{$0.01$}
                \addplot[dashdotted,mark=o,mark options={solid}] table [x = k, y = iterations, col sep=comma]
                {Riesz_Lite.deg2.eps0.001.csv};
                \addlegendentry{$0.001$}
                \end{semilogxaxis}
                \end{tikzpicture}
                \caption{Using the bilinear form~\eqref{eq:lessgoodip} (without damping) as a preconditioner}
        \end{subfigure}
  \caption{Repeating experiment in Figure~\ref{fig:rt1triepsiteration} with next-to-lowest order elements, showing little change in results.}
  \label{fig:rt2triepsiteration}
\end{figure}
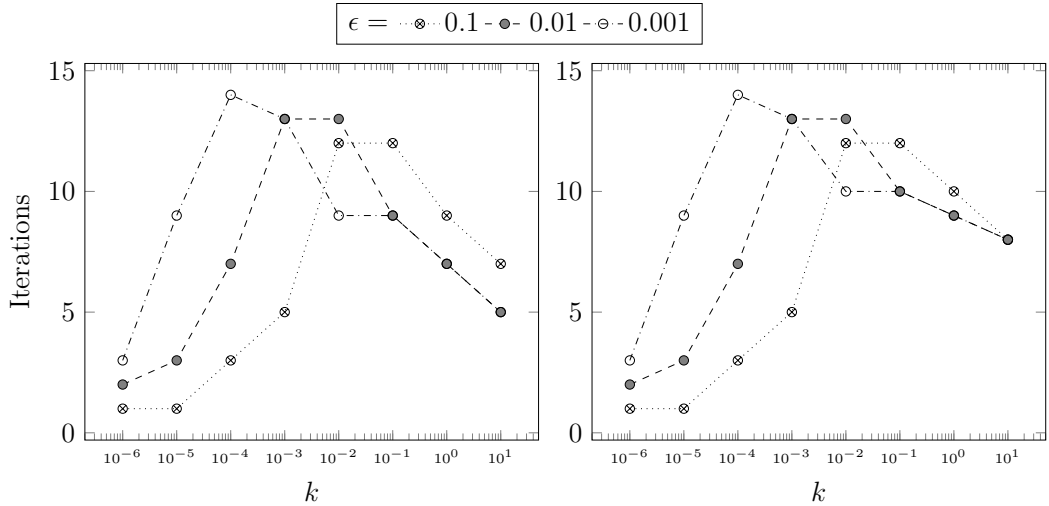

\vspace{1.0in}
\begin{figure}[h]
        \begin{subfigure}[l]{0.475\textwidth}
                \begin{tikzpicture}[scale=0.88]
                \begin{semilogxaxis}[xlabel={$k$}, ylabel={Iterations},
                log basis x=10,
                ylabel near ticks,
                x tick label style={font=\tiny}]
                \%addlegendentry[text width=20pt,text depth=]
                \addplot[dotted,mark=otimes,mark options={solid, fill=gray}] table [x = k, y = iterations, col sep=comma]
                {Riesz.deg1.eps0.1.csv};
                \addplot[dashed,mark=*,mark options={solid,fill=gray}] table [x = k, y = iterations, col sep=comma]
                {Riesz.deg1.eps0.01.csv};
                \addplot[dashdotted,mark=o,mark options={solid}] table [x = k, y = iterations, col sep=comma]
                {Riesz.deg1.eps0.001.csv};
                \end{semilogxaxis}
                \end{tikzpicture}
                \caption{Using the bilinear form~\eqref{eq:goodip} (includes damping) as a preconditioner}
        \end{subfigure}
        \hspace{0.04\textwidth}
        \begin{subfigure}[r]{0.475\textwidth}
                \begin{tikzpicture}[scale=0.88]
                \begin{semilogxaxis}[xlabel={$k$},
                log basis x=10,
                ylabel near ticks,
                x tick label style={font=\tiny},
                legend cell align=left,
                legend style={overlay, at={(-0.16, 1.05)}, anchor=south},
                legend columns=5]
                \addlegendimage{empty legend}
                \addlegendentry[text width=20pt,text depth=]
                {$\epsilon =$};
                \addplot[dotted,mark=otimes,mark options={solid, fill=gray}] table [x = k, y = iterations, col sep=comma]
                {Riesz_Lite.deg1.eps0.1.csv};
                \addlegendentry{$0.1$}
                \addplot[dashed,mark=*,mark options={solid,fill=gray}] table [x = k, y = iterations, col sep=comma]
                {Riesz_Lite.deg1.eps0.01.csv};
                \addlegendentry{$0.01$}
                \addplot[dashdotted,mark=o,mark options={solid}] table [x = k, y = iterations, col sep=comma]
                {Riesz_Lite.deg1.eps0.001.csv};
                \addlegendentry{$0.001$}
                \end{semilogxaxis}
                \end{tikzpicture}
                \caption{Using the bilinear form~\eqref{eq:lessgoodip} (without damping) as a preconditioner}
        \end{subfigure}
        \caption{Iteration count with weighted-norm preconditioning as a function of $k$ and $\epsilon$ on a $128\times 128$ mesh of squares using lowest-order Raviart-Thomas elements.  Again, results are nearly identical to the two triangular cases.}
        \label{fig:rt1quadepsiteration}
\end{figure}
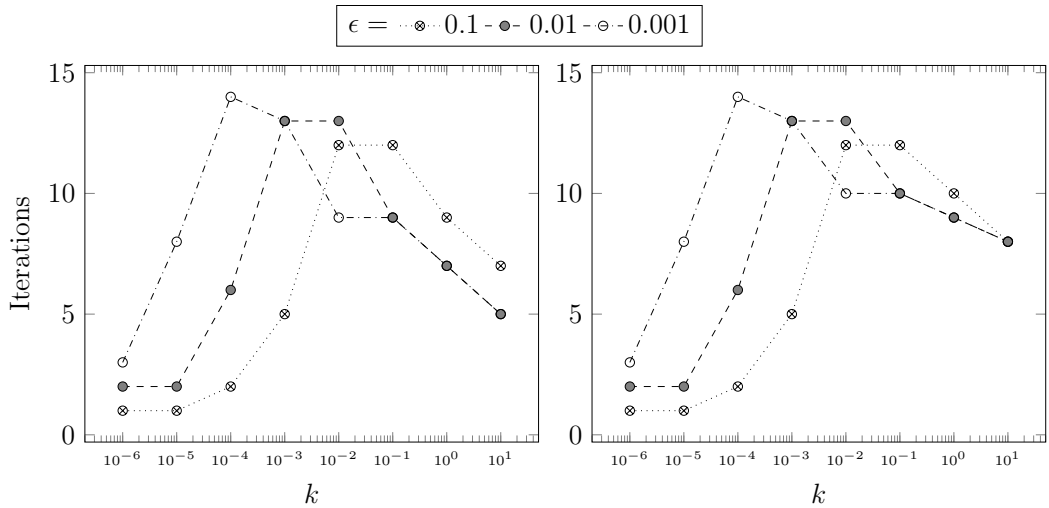

Now, we consider inexact application of the inverse of the top left block of our preconditioner by some kind of multi-level method  Instead of typical pointwise smoothers for problems in $H^1$, the geometric multigrid of Arnold, Falk, and Winther in~\cite{arnold2000multigrid} requires one to solve local problems on cells surrounding vertex patches.  This approach is accessible in Firedrake through the high-level solver interface described in~\cite{kirby2018solver} and the \texttt{pcpatch} package developed under PETSc~\cite{farrell2019pcpatch}.
Kolev and Vassilevski~\cite{kolev2006parallel} also present algebraic multigrid approach.  Originally for $\hcurl$, their method readily adapts to two-dimensional $\hdiv$ problems.  This is an algebraic auxiliary space method based on~\cite{hiptmair2007nodal} that requires the user to set a discrete gradient operator and mesh vertex coordinates and internally solves a (possibly algebraically derived) Poisson-type equation.  The hypre implementation is available through PETSc, and we have also developed a Python wrapper that extracts the mesh vertices and computes a discrete gradient for each mesh and configures the underlying PETSc preconditioner appropriately.  However, we report only results using the geometric multigrid variant here.

In our results, instead of applying the inverse of $P_{V_h}$ via LU factorization at each outer iteration, we apply a single sweep of full multigrid using four levels of refinement.  On each level, we apply one step of Richardson smoothing using the vertex-patch preconditioner described in~\cite{arnold2000multigrid, farrell2019pcpatch}.  A sparse direct method is used on the coarsest mesh.  The PETSc paramters are the same as used for the $\hdiv$ Riesz map in~\cite{farrell2019pcpatch}.   Because our sample problems are not large enough for the asymptotic complexity of multigrid to beat the sparse direct solve, we continue to only report iteration counts rather than timings.

\vspace{1.0in}
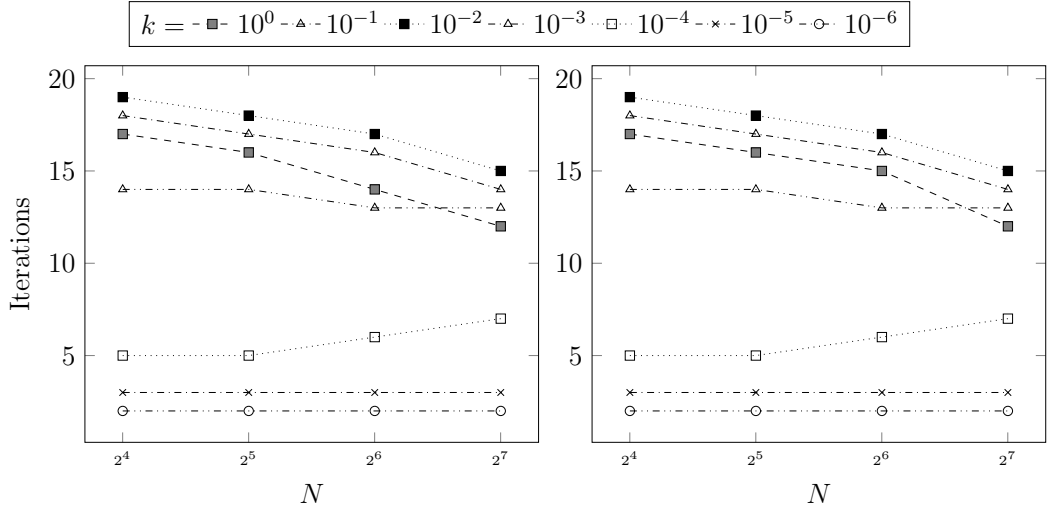
\begin{figure}[h]
        \begin{subfigure}[l]{0.475\textwidth}
                \begin{tikzpicture}[scale=0.88]
                \begin{semilogxaxis}[xlabel={$N$}, ylabel={Iterations},
                log basis x=2,
                ylabel near ticks,
                x tick label style={font=\tiny}]
                \addplot[dashed,mark=square*,mark options={solid,fill=gray}] table [x = N, y = iterations, col sep=comma]
                {tri.mg.Riesz.deg1.k1.0.csv};
                \addplot[dashdotted,mark=triangle,mark options={solid}] table [x = N, y = iterations, col sep=comma]
                {tri.mg.Riesz.deg1.k0.1.csv};
                \addplot[dotted,mark=square*, mark options={solid,fill}] table [x = N, y = iterations, col sep=comma]
                {tri.mg.Riesz.deg1.k0.01.csv};
                \addplot[dash dot dot,mark=triangle, mark options={solid,fill}] table [x = N, y = iterations, col sep=comma]
                {tri.mg.Riesz.deg1.k0.001.csv};
                \addplot[dotted,mark=square, mark options={solid}] table [x = N, y = iterations, col sep=comma]
                {tri.mg.Riesz.deg1.k0.0001.csv};
                \addplot[dashdotted,mark=x, mark options={solid}] table [x = N, y = iterations, col sep=comma]
                {tri.mg.Riesz.deg1.k1e-05.csv};
                \addplot[dash dot dot,mark=o, mark options={solid,fill}] table [x = N, y = iterations, col sep=comma]
                {tri.mg.Riesz.deg1.k1e-06.csv};
                \end{semilogxaxis}
                \end{tikzpicture}
                \caption{Using the bilinear form~\eqref{eq:goodip} (includes damping) as a preconditioner}
       \end{subfigure}
        \hspace{0.04\textwidth}
        \begin{subfigure}[r]{0.475\textwidth}
                \begin{tikzpicture}[scale=0.88]
                \begin{semilogxaxis}[xlabel={$N$},
                log basis x=2,
                ylabel near ticks,
                x tick label style={font=\tiny},
                legend cell align=left,
                legend style={overlay, at={(-0.16, 1.05)}, anchor=south},
                legend columns=9]
                \addlegendimage{empty legend}
                \addlegendentry[text width=20pt,text depth=]
                {$k =$};
                \addplot[dashed,mark=square*,mark options={solid,fill=gray}] table [x = N, y = iterations, col sep=comma]
                {tri.mg.Riesz_Lite.deg1.k1.0.csv};
                \addlegendentry{$10^0$}
                \addplot[dashdotted,mark=triangle,mark options={solid}] table [x = N, y = iterations, col sep=comma]
                {tri.mg.Riesz_Lite.deg1.k0.1.csv};
                \addlegendentry{$10^{-1}$}
                \addplot[dotted,mark=square*, mark options={solid,fill}] table [x = N, y = iterations, col sep=comma]
                {tri.mg.Riesz_Lite.deg1.k0.01.csv};
                \addlegendentry{$10^{-2}$}
                \addplot[dash dot dot,mark=triangle, mark options={solid,fill}] table [x = N, y = iterations, col sep=comma]
                {tri.mg.Riesz_Lite.deg1.k0.001.csv};
                \addlegendentry{$10^{-3}$}
                \addplot[dotted,mark=square, mark options={solid}] table [x = N, y = iterations, col sep=comma]
                {tri.mg.Riesz_Lite.deg1.k0.0001.csv};
                \addlegendentry{$10^{-4}$}
                \addplot[dashdotted,mark=x, mark options={solid}] table [x = N, y = iterations, col sep=comma]
                {tri.mg.Riesz_Lite.deg1.k1e-05.csv};
                \addlegendentry{$10^{-5}$}
                \addplot[dash dot dot,mark=o, mark options={solid,fill}] table [x = N, y = iterations, col sep=comma]
               {tri.mg.Riesz_Lite.deg1.k1e-06.csv};
                \addlegendentry{$10^{-6}$}
                \end{semilogxaxis}
                \end{tikzpicture}
                \caption{Using the bilinear form~\eqref{eq:lessgoodip} (without damping) as a preconditioner}
        \end{subfigure}
        \caption{Iteration count versus mesh refinement under various $k$ values for $C=f=1$, $\beta=0.1$, and $\epsilon=0.01$ using lowest-order triangular Raviart-Thomas elements.  Instead of inverting $P_{V_h}$ by LU factorization, however, a single full multigrid cycle is used.  Comparing to  Figure~\ref{fig:rt1trivaryingmesh} reveals a slight increase in iteration count in exchange for forgoing the sparse direct factorization.  }  
        \label{fig:rt1trivaryingmesh_mg}
\end{figure}

\vspace{1.0in}
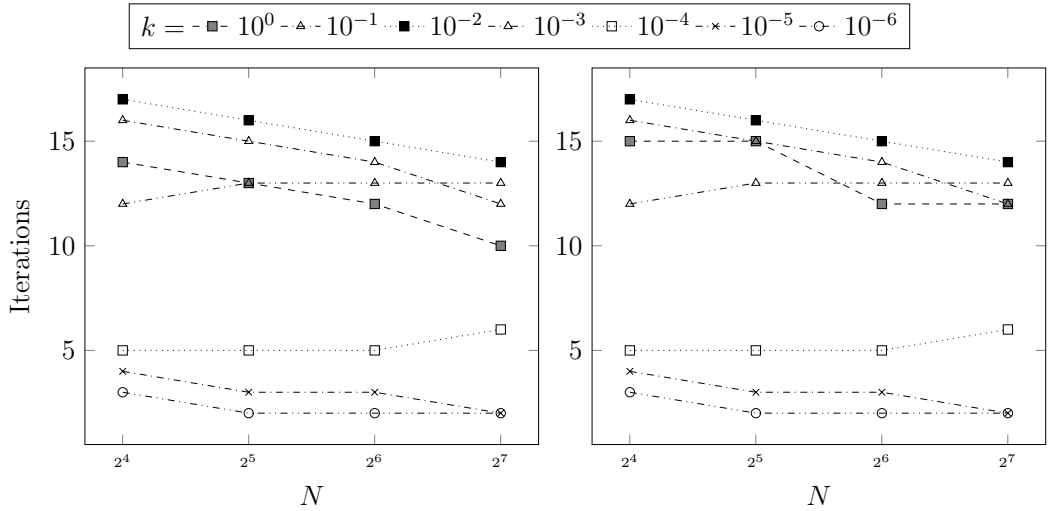
\begin{figure}[h]
        \begin{subfigure}[l]{0.475\textwidth}
                \begin{tikzpicture}[scale=0.88]
                \begin{semilogxaxis}[xlabel={$N$}, ylabel={Iterations},
                log basis x=2,
                ylabel near ticks,
                x tick label style={font=\tiny}]
                \addplot[dashed,mark=square*,mark options={solid,fill=gray}] table [x = N, y = iterations, col sep=comma]
                {quad.mg.Riesz.deg1.k1.0.csv};
                \addplot[dashdotted,mark=triangle,mark options={solid}] table [x = N, y = iterations, col sep=comma]
                {quad.mg.Riesz.deg1.k0.1.csv};
                \addplot[dotted,mark=square*, mark options={solid,fill}] table [x = N, y = iterations, col sep=comma]
                {quad.mg.Riesz.deg1.k0.01.csv};
                \addplot[dash dot dot,mark=triangle, mark options={solid,fill}] table [x = N, y = iterations, col sep=comma]
                {quad.mg.Riesz.deg1.k0.001.csv};
                \addplot[dotted,mark=square, mark options={solid}] table [x = N, y = iterations, col sep=comma]
                {quad.mg.Riesz.deg1.k0.0001.csv};
                \addplot[dashdotted,mark=x, mark options={solid}] table [x = N, y = iterations, col sep=comma]
                {quad.mg.Riesz.deg1.k1e-05.csv};
                \addplot[dash dot dot,mark=o, mark options={solid,fill}] table [x = N, y = iterations, col sep=comma]
                {quad.mg.Riesz.deg1.k1e-06.csv};
                \end{semilogxaxis}
                \end{tikzpicture}
                \caption{Using the bilinear form~\eqref{eq:goodip} (includes damping) as a preconditioner}
       \end{subfigure}
        \hspace{0.04\textwidth}
        \begin{subfigure}[r]{0.475\textwidth}
                \begin{tikzpicture}[scale=0.88]
                \begin{semilogxaxis}[xlabel={$N$},
                log basis x=2,
                ylabel near ticks,
                x tick label style={font=\tiny},
                legend cell align=left,
                legend style={overlay, at={(-0.16, 1.05)}, anchor=south},
                legend columns=9]
                \addlegendimage{empty legend}
                \addlegendentry[text width=20pt,text depth=]
                {$k =$};
                \addplot[dashed,mark=square*,mark options={solid,fill=gray}] table [x = N, y = iterations, col sep=comma]
                {quad.mg.Riesz_Lite.deg1.k1.0.csv};
                \addlegendentry{$10^0$}
                \addplot[dashdotted,mark=triangle,mark options={solid}] table [x = N, y = iterations, col sep=comma]
                {quad.mg.Riesz_Lite.deg1.k0.1.csv};
                \addlegendentry{$10^{-1}$}
                \addplot[dotted,mark=square*, mark options={solid,fill}] table [x = N, y = iterations, col sep=comma]
                {quad.mg.Riesz_Lite.deg1.k0.01.csv};
                \addlegendentry{$10^{-2}$}
                \addplot[dash dot dot,mark=triangle, mark options={solid,fill}] table [x = N, y = iterations, col sep=comma]
                {quad.mg.Riesz_Lite.deg1.k0.001.csv};
                \addlegendentry{$10^{-3}$}
                \addplot[dotted,mark=square, mark options={solid}] table [x = N, y = iterations, col sep=comma]
                {quad.mg.Riesz_Lite.deg1.k0.0001.csv};
                \addlegendentry{$10^{-4}$}
                \addplot[dashdotted,mark=x, mark options={solid}] table [x = N, y = iterations, col sep=comma]
                {quad.mg.Riesz_Lite.deg1.k1e-05.csv};
                \addlegendentry{$10^{-5}$}
                \addplot[dash dot dot,mark=o, mark options={solid,fill}] table [x = N, y = iterations, col sep=comma]
               {quad.mg.Riesz_Lite.deg1.k1e-06.csv};
                \addlegendentry{$10^{-6}$}
                \end{semilogxaxis}
                \end{tikzpicture}
                \caption{Using the bilinear form~\eqref{eq:lessgoodip} (without damping) as a preconditioner}
        \end{subfigure}
        \caption{Iteration count versus mesh refinement under various $k$ values for $C=f=1$, $\beta=0.1$, and $\epsilon=0.01$ using lowest-order Raviart-Thomas elements on squares.  A full multigrid cycle using four levels, as in Figure~\ref{fig:rt1trivaryingmesh_mg}, is used instead of sparse direct factorization of $P_{V_h}$, again resulting in a slight increase in iteration count.}  
        \label{fig:rt1quadvaryingmesh_mg}
\end{figure}

As a final example, we consider a problem with nonlinear damping.  In particular, we choose $g(u) = |u|^2 u$ in~\eqref{eq:nonlinweakform} (this bypasses numerical wrinkles in differentiating through the singularity of quadratic damping).  Our typical use case is in time-stepping, where the solution at the previous time step serves as an initial guess for Newton iteration.  To imitate having a close initial guess, we seed Newton's method with the solution of the linear, undamped problem.  In each case, we observed that Newton requires but a single iteration to converge, suggesting that there is not a significant need to split the nonlinear term from the rest of the equation for implicit time-stepping to be effective.

\vspace{1.0in}
\begin{figure}[h]
        \begin{subfigure}[l]{0.475\textwidth}
                \begin{tikzpicture}[scale=0.88]
                \begin{semilogxaxis}[xlabel={$N$}, ylabel={Iterations},
                log basis x=2,
                ylabel near ticks,
                x tick label style={font=\tiny}]
                  \addplot[dashed,mark=square*,mark options={solid,fill=gray}] table [x = N, y = Lin, col sep=comma]
                {tri.nonlin.Riesz.deg1.k1.0.csv};
                \addplot[dashdotted,mark=triangle,mark options={solid}] table [x = N, y = Lin, col sep=comma]
                {tri.nonlin.Riesz.deg1.k0.1.csv};
                \addplot[dotted,mark=square*, mark options={solid,fill}] table [x = N, y = Lin, col sep=comma]
                {tri.nonlin.Riesz.deg1.k0.01.csv};
                \addplot[dash dot dot,mark=triangle, mark options={solid,fill}] table [x = N, y = Lin, col sep=comma]
                {tri.nonlin.Riesz.deg1.k0.001.csv};
                \addplot[dotted,mark=square, mark options={solid}] table [x = N, y = Lin, col sep=comma]
                {tri.nonlin.Riesz.deg1.k0.0001.csv};
                \addplot[dashdotted,mark=x, mark options={solid}] table [x = N, y = Lin, col sep=comma]
                {tri.nonlin.Riesz.deg1.k1e-05.csv};
                \addplot[dash dot dot,mark=o, mark options={solid,fill}] table [x = N, y = Lin, col sep=comma]
                {tri.nonlin.Riesz.deg1.k1e-06.csv};
                \end{semilogxaxis}
                \end{tikzpicture}
                \caption{Using the bilinear form~\eqref{eq:goodip} (includes damping) as a preconditioner}
       \end{subfigure}
        \hspace{0.04\textwidth}
        \begin{subfigure}[r]{0.475\textwidth}
                \begin{tikzpicture}[scale=0.88]
                \begin{semilogxaxis}[xlabel={$N$},
                log basis x=2,
                ylabel near ticks,
                x tick label style={font=\tiny},
                legend cell align=left,
                legend style={overlay, at={(-0.16, 1.05)}, anchor=south},
                legend columns=9]
                \addlegendimage{empty legend}
                \addlegendentry[text width=20pt,text depth=]
                               {$k =$};
                \addplot[dashed,mark=square*,mark options={solid,fill=gray}] table [x = N, y = Lin, col sep=comma]
                {tri.nonlin.Riesz_Lite.deg1.k1.0.csv};
                \addlegendentry{$10^0$}
                \addplot[dashdotted,mark=triangle,mark options={solid}] table [x = N, y = Lin, col sep=comma]
                {tri.nonlin.Riesz_Lite.deg1.k0.1.csv};
                \addlegendentry{$10^{-1}$}
                \addplot[dotted,mark=square*, mark options={solid,fill}] table [x = N, y = Lin, col sep=comma]
                {tri.nonlin.Riesz_Lite.deg1.k0.01.csv};
                \addlegendentry{$10^{-2}$}
                \addplot[dash dot dot,mark=triangle, mark options={solid,fill}] table [x = N, y = Lin, col sep=comma]
                {tri.nonlin.Riesz_Lite.deg1.k0.001.csv};
                \addlegendentry{$10^{-3}$}
                \addplot[dotted,mark=square, mark options={solid}] table [x = N, y = Lin, col sep=comma]
                {tri.nonlin.Riesz_Lite.deg1.k0.0001.csv};
                \addlegendentry{$10^{-4}$}
                \addplot[dashdotted,mark=x, mark options={solid}] table [x = N, y = Lin, col sep=comma]
                {tri.nonlin.Riesz_Lite.deg1.k1e-05.csv};
                \addlegendentry{$10^{-5}$}
                \addplot[dash dot dot,mark=o, mark options={solid,fill}] table [x = N, y = Lin, col sep=comma]
               {tri.nonlin.Riesz_Lite.deg1.k1e-06.csv};
                \addlegendentry{$10^{-6}$}
                \end{semilogxaxis}
                \end{tikzpicture}
                \caption{Using the bilinear form~\eqref{eq:lessgoodip} (without damping) as a preconditioner}
        \end{subfigure}
        \caption{Iteration count versus mesh refinement under various $k$ values for $f=1$, $\beta=0.1$, and $\epsilon=0.01$ for the nonlinear damping law $g(u) = |u|^2 u$.}  
        \label{fig:rt1trivaryingmesh_nonlin}
\end{figure}
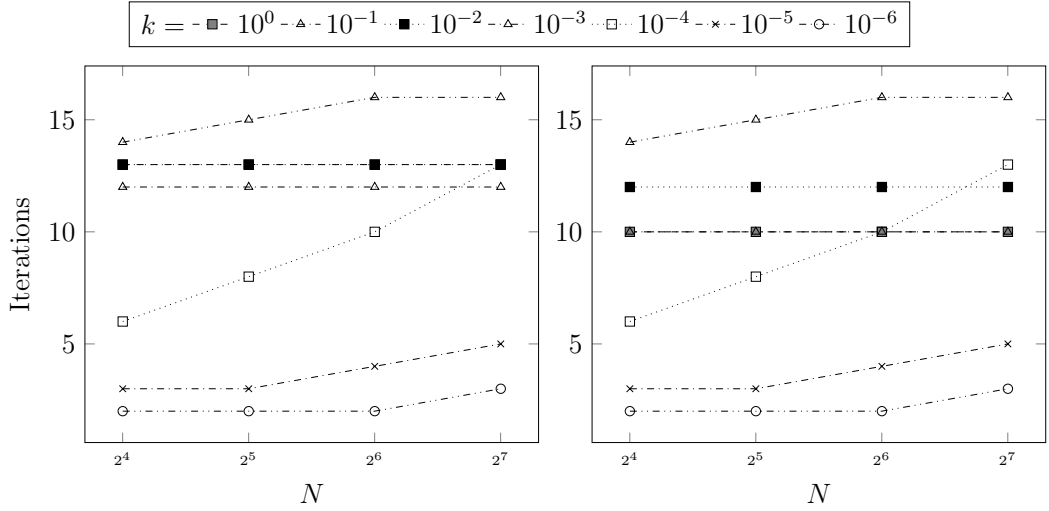

\section{Conclusions}
\label{sec:conc}
We have developed effective weighted-norm preconditioners for a mixed finite element/Crank-Nicolson discretization of the linearized rotating shallow water equations with (possibly nonlinear) damping.  These preconditioners are based on defining a suitable inner product in which the operators are bounded with bounded inverse in a relatively paramater-independent way.  These estimates in turn control the spectrum of the preconditioned operator.  Our estimates remain dependent on the ratio $\tfrac{k}{\epsilon}$, although this seems relatively benign in practice.  Moreover, inexactly applying the preconditioner through a multigrid sweep and neglecting damping terms in the inner product lead to further simplifications with only mild effects on iteration count.

This work suggests many future research directions.  Since our theory
and numerical observations both seem independent of mesh type and
discretization order, we hope to apply these preconditioners to
unstructured quadrilateral elements such as Arbogast-Correa~\cite{arbogast2016two}.  Moreover,
our techniques should be applicable to more complex tide models that
might include additional nonlinearities or layering.

\bibliographystyle{elsarticle-num} 
\bibliography{bib}





\end{document}